\theoremstyle{plain}
\newtheorem{theorem}{Theorem}[section]
\newtheorem{proposition}[theorem]{Proposition}
\newtheorem{lemma}[theorem]{Lemma}
\newtheorem*{claim}{Claim}
\newtheorem*{lemma*}{Lemma}
\theoremstyle{plain}
\newtheorem{remark}[theorem]{Remark}
\theoremstyle{definition}
\newtheorem{definition}[theorem]{Definition}
\theoremstyle{definition}
\newtheorem{example}{Example}[section]
\def\R{\ensuremath{\mathbb R}}
\def\N{\ensuremath{\mathbb N}}
\def\Q{\ensuremath{\mathcal Q}}
\def\I{\ensuremath{{\bf 1}}}
\def\P{\ensuremath{\mathcal P}}
\def\supp{\ensuremath{\text{supp}}}
\def\X{\mathcal{X}}
\def\ie{{\em i.e.}, }
\def\eg{{\em e.g.} }
\def\bbm{\ensuremath{\mathbbm}}
\def\eps{\varepsilon}
\numberwithin{equation}{section}
\title{Almost sure convergence of cover times for $\psi$-mixing systems}
\date{December 2024}
\begin{document}

\author[B.~Zhao]{Boyuan Zhao}
\address{Mathematical Institute\\
University of St Andrews\\
North Haugh\\
St Andrews\\
KY16 9SS\\
Scotland} 
\email{bz29@st-andrews.ac.uk}

\keywords{Cover time, exponentially $\psi$-mixing, irrational rotations, Minkowski dimension}
\subjclass[2020]{37A25, 37E05, 37E10}
\begin{abstract}
Given a topologically transitive system on the unit interval, one can investigate the \emph{cover time}, \ie time for an orbit to reach certain level of resolution in the repeller. We introduce a new notion of dimension, namely the \emph{stretched Minkowski dimension}, and show that under mixing conditions, the asymptotics of typical cover times are determined by Minkowski dimensions  when they are finite, or by stretched Minkowski dimensions otherwise. For application, we show that for countably full-branched affine maps, results using the usual Minkowski dimensions fail to produce a finite log limit of cover times whilst the stretched version gives an finite limit. In addition, cover times of irrational rotations are explicitly calculated as counterexamples, due to the absence of mixing.
\end{abstract}
\maketitle

\section{Introduction}
Let $\X\subset[0,1]$ and $f:\X\to[0,1]$ a topologically transitive piecewise expanding Markov map equipped with an ergodic invariant probability measure $\mu$, we study the \emph{cover times} for points in the repeller $\Lambda$, \ie given $x\in\Lambda$ let
\[\tau_r(x):=\inf\left\{k:\forall y\in\Lambda, \exists j\le k:y\in d(f^j(x),y)<r\right\}.\]
The first quantitative result of expected cover times $\bbm E[\tau_r]$ was obtained for Brownian motions in \cite{Mat88}, and generalised in recent works \cite{BarJurKlo} and \cite{JurTod24} for \emph{chaos games} associated to iterative function system, and one dimensional dynamical systems respectively. In \cite{BarJurKlo}, an almost sure convergence for $-\log \tau_r/\log r$ was also demonstrated, assuming the invariant measure $\mu$ supported on the attractor of the IFS satisfies rapid mixing conditions. All results suggest that the asymptotic behaviour of $\tau_r$ is crucially linked to the Minkowski dimensions: let $M_\mu(r):=\min_{x\in\supp(\mu)}\mu(B(x,r))$, the \emph{upper and lower Minkowski dimensions} of $\mu$ are defined respectively by 
\begin{equation*}
\overline{\dim}_M(\mu):=\limsup_{r\to0}\frac{\log M_\mu(r)}{\log r},\hspace{2mm}\underline{\dim}_M(\mu):=\liminf_{r\to0}\frac{\log M_\mu(r)}{\log r},
\end{equation*} 
and simply write $\dim_M(\mu)$ when the two quantities coincide. In other words, these dimension-like quantities reflect the decay rate of the minimal $\mu$-measure ball at scale $r$, and they are closely related to the box-counting dimension of the ambient space (see \cite{FalFraKae} for more details). In addition, the Minkowski dimensions of $\mu$ govern the asymptotic behaviour of  hitting times associated to the balls which are most `unlikely' to be visited at small scales. Our first result below gives an almost sure asymptotic growth rate of cover times in terms of the Minkowski dimensions.
\begin{theorem}\label{thm: finite minkowski dimension case}
Let $(f,\mu)$ be a probability preserving system where $f$ is topologically transitive, Markov and piecewise expanding. If $\overline\dim_M(\mu)<\infty$, then for $\mu$-a.e. $x$ in the repeller,
\begin{equation*}\label{eqn: main thm finite Min dim}\limsup_{r\to0}\frac{\log \tau_r(x)}{-\log r}\ge\overline\dim_M(\mu),\hspace{3mm}\liminf_{r\to0}\frac{\log \tau_r(x)}{-\log r}\ge\underline{\dim}_M(\mu).
\end{equation*}
If $(f,\mu)$ is exponentially $\psi$-mixing, then for $\mu$-almost every $x\in\Lambda$, the inequalities above are improved to
\begin{equation*}
    \limsup_{r\to0}\frac{\log \tau_r(x)}{-\log r}=\overline\dim_M(\mu),\hspace{3mm}\liminf_{r\to0}\frac{\log \tau_r(x)}{-\log r}=\underline{\dim}_M(\mu).
\end{equation*}
\end{theorem}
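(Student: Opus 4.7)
The proof splits into a lower bound (using only stationarity) and a matching upper bound (using exponential $\psi$-mixing). The lower bound rests on the observation that to $r$-cover $\Lambda$ the orbit must in particular hit the hardest-to-visit ball: choose $x_r^*\in\supp(\mu)$ with $\mu(B(x_r^*,r))=M_\mu(r)$ (up to any small tolerance), so that $\tau_r(x)\ge\tau_{B(x_r^*,r)}(x)$. By $f$-invariance and a union bound, $\mu(\tau_{B(x_r^*,r)}\le T)\le(T+1)M_\mu(r)$. For $\limsup\ge\overline{\dim}_M(\mu)$, I pick a subsequence $s_k\to 0$ with $s_k<\theta^k$ along which $\log M_\mu(s_k)/\log s_k\to\overline{\dim}_M(\mu)$ and set $T_k=s_k^{-\overline{\dim}_M+\varepsilon}$; the bound above makes $\mu(\tau_{s_k}\le T_k)$ summable, so Borel--Cantelli yields $\tau_{s_k}>T_k$ infinitely often $\mu$-a.s., and letting $\varepsilon\to 0$ finishes. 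For $\liminf\ge\underline{\dim}_M(\mu)$, I work along $r_n=\theta^n$, exploit the $\liminf$ definition in the form $M_\mu(r)\le r^{\underline{\dim}_M-\varepsilon}$ for small $r$, set $T_n=r_n^{-\underline{\dim}_M+2\varepsilon}$, and interpolate for $r\in[r_{n+1},r_n]$ using monotonicity of $r\mapsto\tau_r$.

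For the upper bound, exponential $\psi$-mixing yields, after sandwiching balls between Markov cylinders of depth $\sim-\log r$ and invoking standard hitting-time estimates, a bound $\mu(\tau_B>T)\le Ce^{-c\mu(B)T}$ for balls $B$. I cover $\supp(\mu)$ by $N(r)$ balls $B(x_i,r/2)$ centered in $\supp(\mu)$, noting $N(r)\le 1/M_\mu(r/2)$ by a disjointness-packing argument; since hitting every $B(x_i,r/2)$ forces the orbit to form an $r$-net of $\Lambda$, one has $\tau_r(x)\le\max_i\tau_{B(x_i,r/2)}(x)$. A union bound then gives
\[\mu(\tau_r>T)\le N(r)\cdot C e^{-cM_\mu(r/2)T}.\]
Taking $T_r=C'\log N(r)/M_\mu(r/2)$ with $C'$ large enough makes this at most $N(r)^{-\delta}$ for some $\delta>0$, which is summable along any geometric $r_n=\theta^n$. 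Since $\log T_r/(-\log r)=\log M_\mu(r/2)/\log(r/2)+o(1)$, Borel--Cantelli plus the interpolation argument used above yields $\limsup\log\tau_r(x)/(-\log r)\le\overline{\dim}_M(\mu)$ for $\mu$-a.e.\ $x$. For $\liminf\le\underline{\dim}_M(\mu)$, I build a sequence $s_k\to 0$ fast enough for summability along which $\log M_\mu(s_k)/\log s_k\to\underline{\dim}_M(\mu)$ and apply Borel--Cantelli on $r_k=2s_k$, so that $M_\mu(r_k/2)=M_\mu(s_k)$ realises the $\liminf$ and the bound above gives $\log\tau_{r_k}/(-\log r_k)\to\underline{\dim}_M(\mu)$ along this sequence.

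The main obstacle is the hitting-time estimate for arbitrary balls under $\psi$-mixing: standard results (\eg Abadi--Galves) are formulated for cylinders of the Markov partition, so one has to sandwich each ball $B$ between cylinders of depth $\sim-\log r$ and control the sandwich error so that the effective exponential decay rate is still driven by $\mu(B)$, with the $\psi$-mixing coefficient at the corresponding time scale absorbed into the constants $c,C$. A secondary nuisance is that the covering construction naturally outputs $M_\mu(r/2)$ rather than $M_\mu(r)$, but the cost is only bounded multiplicative factors and additive $\log 2$ terms, which disappear after dividing by $-\log r$ in the limit.
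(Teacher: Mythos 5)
Your lower bound is essentially the paper's argument: fix for each scale $r$ a worst-case ball realising $M_\mu(r)$, note $\tau_r(x)\ge$ its hitting time, apply a union bound over iterates using $f$-invariance, and finish with Borel--Cantelli; the $\limsup$/$\liminf$ cases are handled by working along the full sequence versus a subsequence on which the relevant $\lim$ is (nearly) attained. This matches Proposition~\ref{Prop lower bound 2} and its companion.

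For the upper bound you take a genuinely different route. The paper builds the hitting-time estimate by hand: it samples the orbit at times $jL'(n)$ for $j=1,\dots,m$, with the gap $L'(n)=L(n)+\tfrac{1}{\rho}\bigl(\cdot-\log C_1\bigr)$ chosen so that $\psi(L'(n)-L(n))$ is smaller than $1/m$, then peels off one factor of $(1+\psi)\bigl(1-\tilde\mu(\tilde B)\bigr)$ per sample to get a self-contained $\bigl((1+\psi)(1-\mu(B))\bigr)^{m}$-type bound; the cost of the spacing $L'(n)$ is only polylogarithmic in the target time and so vanishes after taking $\log/\!\log r$. You instead import an Abadi-type exponential hitting-time lemma $\mu(\tau_B>T)\le Ce^{-c\mu(B)T}$ as a black box, extend it from cylinders to balls by the same cylinder sandwich $B(y,r)\subset\pi\tilde B(y,r)\subset B(y,2r)$, and then run a covering + union bound with $T_r\asymp\log N(r)/M_\mu(r/2)$. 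Both give the same summable tail and the same limit. Your approach is more modular, but the ``standard'' lemma you cite is exactly the content the paper manufactures with the $L'$-spacing; since you correctly flag the ball-to-cylinder extension as the real work, this is a presentational difference rather than a gap. Two minor points: your packing bound $N(r)\le 1/M_\mu(r/2)$ needs the centres to be a maximal $r$-separated set, not an arbitrary cover (though for a subset of $[0,1]$ the trivial $N(r)=O(1/r)$ also suffices, as the paper uses); and the inequality $\tau_r(x)\le\max_i\tau_{B(x_i,r/2)}(x)$ really gives an $r$-net up to a non-strict inequality, so one should tighten the radii slightly — a harmless adjustment absorbed by the $\varepsilon$'s.
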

 In particular, it is true if the invariant measure in question is \emph{doubling}.
\begin{remark}
   We remark that systems with finite Minkowski dimensions, or at least $\overline\dim_M(\mu)<\infty$, are fairly common. In particular, if $\mu$ is \emph{doubling}, \ie there exists constant $D>0$ such that for all $x\in\supp(\mu)$ and $r>0$, $D\mu(B(x,r))\ge \mu(B(x,2r))>0$, then $\overline\dim_M(\mu)<\infty$.
\end{remark}
\begin{proof}
    For each $n\in\N$ let $x_n\in\supp(\mu)$ be such that $\mu(B(x_n,2^{-n}))=M_\mu(2^{-n})$, then by the doubling property,
    \[M_\mu\left(2^{-n}\right)=\mu\left(B\left(x_n,2^{-n}\right)\right)\ge D^{-1}\mu\left(B\left(x_n,2^{-n+1}\right)\right)\ge D^{-1}M_\mu\left(2^{-n+1}\right)=D^{-1}\mu\left(B(x_{n-1}2^{-n+1}\right),\]
    and reiterating this one gets $M_\mu(2^{-n})\ge D^{-n+1}M_\mu(1/2)$, in other words
    \[ \frac{\log M_\mu(2^{-n})}{-n\log 2}\le \frac{-(n-1)\log D+\log M_\mu(1/2)}{-n\log 2}.\]
    As for all $r>0$, there is unique $n\in\N$ such that $2^{-n}<r\le 2^{-n+1}$, and $\frac{\log 2^{-n}}{\log 2^{-n+1}}=1$,
\[\limsup_{r\to0}\frac{\log M_\mu(r)}{\log r}=\limsup_{n\to\infty}\frac{\log M_\mu\left(2^{-n}\right)}{-n\log 2}\le \frac{\log D}{\log 2}<\infty.\qedhere\]
\end{proof}
However, the Minkowski dimensions are not always finite due to non-doubling behaviours, or more  extreme decay of $M_\mu(r)$ (see \Cref{Example: full-branched interval map}). Hence we need a new notion of dimension, invariant under scalar multiplication (replacing $M_\mu(r)$ by $M_\mu(cr)$ for any $c>0$ the limit does not change), to capture such decay rate in $r$.
\begin{definition}\label{def: generalised Minkowski dimension}
Define the upper and lower \emph{stretched Minkowski dimensions} by
    \[\overline{\dim}_M^s(\mu):=\limsup_{r\to0}\frac{\log|\log M_\mu(r)|}{-\log r},\hspace{3mm}\underline{\dim}_M^s(\mu):=\liminf_{r\to0}\frac{\log \log|M_\mu(r)|}{-\log r}.\]
\end{definition}

Those quantities should be of independent interest. Our second theorem below deals with almost sure cover times for systems in which $M_\mu(r)$ decays at stretched-exponential rates.
\begin{theorem}\label{thm: main theorem}
 Let $(f,\mu)$ be an ergodic probability preserving system where $f$ is topologically transitive, Markov and piecewise expanding. If $\overline\dim_M(\mu)=\infty$, but $0<\underline\dim_M^s(\mu),\overline\dim_M^s(\mu)<\infty$, then for $\mu$-almost every $x\in\Lambda$, 
 \begin{equation}\liminf_{r\to0}\frac{\log \log\tau_r(x)}{-\log r}\ge\underline{dim}_M^s(\mu),
 \hspace{3mm}\limsup_{r\to0}\frac{\log \log \tau_r(x)}{-\log r}\ge\overline{dim}_M^s(\mu)\label{eqn: main theorem without mixing}\end{equation}
 If $(f,\mu)$ is exponentially $\psi$-mixing, then for $\mu$-almost every $x\in\Lambda$,
 \begin{equation}\label{eqn: main theorem with mixing}
     \liminf_{r\to0}\frac{\log \log\tau_r(x)}{-\log r}=\underline{dim}_M^s(\mu), \hspace{3mm}\limsup_{r\to0}\frac{\log \log\tau_r(x)}{-\log r}=\overline{dim}_M^s(\mu).
 \end{equation}
\end{theorem}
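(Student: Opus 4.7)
The plan is to sandwich the cover time between hitting times of balls and then analyse those via Borel--Cantelli. Write $T_B(x):=\inf\{k\ge 0:f^k(x)\in B\}$, and for each $r>0$ choose $y_r\in\Lambda$ realising the minimum in $M_\mu(r)$. On one hand, $\tau_r(x)\ge T_{B(y_r,r)}(x)$; on the other, picking any maximal $(r/2)$-separated net $\{y_1,\dots,y_{K_r}\}\subset\Lambda$ of cardinality $K_r\le \lceil 2/r\rceil$,
\[\tau_r(x)\le \max_{1\le i\le K_r}T_{B(y_i,r/2)}(x).\]
In the stretched-exponential regime $|\log M_\mu(r)|$ grows super-polynomially in $1/r$ while $K_r$ is only polynomial, so $K_r$ is negligible on a $\log\log$ scale and the asymptotics of $\tau_r$ are governed entirely by $M_\mu(r)$.

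\textbf{Lower bounds without mixing.} These require only the first-moment inequality
\[\p(T_B\le N)\le \sum_{j=0}^{N-1}\mu(f^{-j}B)=N\mu(B),\]
which uses invariance of $\mu$ but no mixing. For the $\limsup$ bound in \eqref{eqn: main theorem without mixing}, select $r_n\downarrow 0$ along which $\log|\log M_\mu(r_n)|/(-\log r_n)\to\overline{\dim}_M^s(\mu)$, and set $N_n:=\lfloor M_\mu(r_n)^{-1+\delta}\rfloor$ for a fixed $\delta\in(0,1)$. Then $\sum_n \p(T_{B(y_{r_n},r_n)}\le N_n)\le \sum_n M_\mu(r_n)^{\delta}<\infty$ by the stretched-exponential decay of $M_\mu(r_n)$, and Borel--Cantelli yields $\tau_{r_n}(x)\ge T_{B(y_{r_n},r_n)}(x)>N_n$ eventually $\mu$-a.s. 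Taking $\log\log$, dividing by $-\log r_n$ and letting $\delta\downarrow 0$ produces the $\limsup$ bound. The $\liminf$ bound follows similarly with a geometric sequence $r_n=2^{-n}$ and monotonicity of $\tau_r$ in $r$ to interpolate to arbitrary $r$.

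\textbf{Upper bounds under mixing.} Exponential $\psi$-mixing should give a hitting-time tail
\[\p(T_B>t)\le C e^{-c\,t\,\mu(B)}\]
for $t\mu(B)$ sufficiently large, obtained by approximating $B$ from inside and outside by unions of Markov cylinders of comparable $\mu$-measure (using distortion control from piecewise expansion) and then applying the standard Galves--Schmitt / Abadi estimate for cylinders. A union bound over the $(r/2)$-net then yields, for $r_n=2^{-n}$ and $t_n:=(\log K_{r_n})^2/M_\mu(r_n/2)$,
\[\p(\tau_{r_n}(x)>t_n)\le K_{r_n}\cdot C e^{-c(\log K_{r_n})^2},\]
which is summable in $n$. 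A second Borel--Cantelli gives $\tau_{r_n}(x)\le t_n$ eventually $\mu$-a.s.; since $\log t_n=|\log M_\mu(r_n/2)|+O(\log\log(1/r_n))$ with the first term super-polynomial in $1/r_n$, taking $\log\log$ and dividing by $-\log r_n$ produces the matching upper bounds in \eqref{eqn: main theorem with mixing}.

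\textbf{Main obstacle.} Two technical points are where the real work lies. The first is transferring the $\psi$-mixing exponential tail from cylinders (where it is classical) to the balls $B(y,r)$: one must show that the worst-case $\mu(B(y_r,r))$ is captured up to a bounded factor by some cylinder approximation at scale $r$, without this constant leaking into the dimension via a multiplicative scale shift. The second is the passage from a discrete sequence $r_n$ to all $r\downarrow 0$: one needs $\log|\log M_\mu(cr)|/(-\log r)$ to share the same $\limsup$ and $\liminf$ as $\log|\log M_\mu(r)|/(-\log r)$ for every fixed $c>0$. This scale invariance is precisely what \Cref{def: generalised Minkowski dimension} is designed to guarantee in the regime $\overline{\dim}_M(\mu)=\infty$, and is the reason the ordinary Minkowski dimension --- which is \emph{not} invariant under such rescalings when $M_\mu$ decays stretched-exponentially --- cannot give a finite answer here.
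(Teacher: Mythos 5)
Your proposal is correct in outline and follows essentially the same skeleton as the paper for the lower bounds: a first-moment (Markov) inequality on hitting times of the worst-case ball plus Borel--Cantelli, matching the paper's Propositions~\ref{Prop lower bound 2} and the subsequent one almost verbatim. For the upper bounds you take a genuinely different route. The paper does not invoke a Galves--Schmitt/Abadi tail; instead it directly exploits exponential $\psi$-mixing by sampling the orbit at times $jL'(4n)$ with a carefully chosen gap $L'(4n)=L(4n)+\rho^{-1}((4n)^{\overline\alpha+\eps}-\log C_1)$, so that the mixing error term $(1+\psi(\cdot))^{e^{n^{\overline\alpha+\eps}}}$ stays bounded while $(1-\mu(B(y,1/4n)))^{e^{n^{\overline\alpha+\eps}}}$ decays super-exponentially. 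This is effectively an inline re-derivation of a (weakened) hitting-time tail, self-contained and requiring only the stated mixing hypothesis on cylinders, transferred to balls via the symbolic set $\tilde B(y,r)$. Your version buys brevity by delegating to the classical estimate, but it pushes the real work into the two points you flag: (i) establishing the uniform constant $c$ in $\p(T_B>t)\le Ce^{-ct\mu(B)}$ over all balls $B$ at scale $r$ --- the classical statements are for single cylinders, and extending uniformly to unions of cylinders of varying depth (as in $\tilde B(y,r)$ or $\mathcal W_r$, relevant in the countably branched Example~\ref{Example: full-branched interval map}) is exactly where the paper's spaced-iterate computation does its work; and (ii) the subsequence bookkeeping for the $\liminf$ bound (the paper's Proposition~\ref{Prop lower bound 1}), which you gesture at but do not carry out. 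Neither is a fatal gap, but they are where the argument is actually nontrivial.

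One small conceptual slip in your ``Main obstacle'' paragraph: the ordinary Minkowski dimension $\limsup_{r\to0}\log M_\mu(r)/\log r$ \emph{is} invariant under the rescaling $r\mapsto cr$ even when $M_\mu$ decays stretched-exponentially (both sides are then $+\infty$, and in the finite regime the ratio $\log(cr)/\log r\to 1$ handles it). The reason the ordinary dimension fails here is simply that the limit is infinite, not that scale invariance breaks; the stretched version in \Cref{def: generalised Minkowski dimension} is introduced to resolve the super-polynomial regime, and the paper's remark on scalar invariance is a sanity check on the new definition, not a criticism of the old one.
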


\subsection*{Layout of the paper}
Basic definitions are introduced in \Cref{sec: setup} and we delay the proofs of the main theorems to \Cref{sec: proofs of main theorems}. Several examples that satisfy \Cref{thm: finite minkowski dimension case} and \Cref{thm: main theorem} will be discussed in \Cref{sec: examples}. In \Cref{sec: rotation} we will also prove that for irrational rotations, which are known to have no mixing behaviour, \Cref{thm: finite minkowski dimension case} fails for almost every point when the rotations are of type $\eta$ (see \Cref{def: rotation type}) for some $\eta>1$. Lastly in \Cref{section: flows} we show that similar results hold for flows under some natural conditions.

\section{Setup}\label{sec: setup}
Let $\mathcal{A}$ be a finite or countable index set, and $\mathcal{P}=\{P_a\}_{a\in\mathcal{A}}$ a collection of subintervals in $[0,1]$ with disjoint interiors covering $\X$. We say $f:\X\rightarrow[0,1]$ is a \textit{piecewise expanding Markov map} if\\
(1) for any $a\in\mathcal A$, $f_a:=f|_{P_a}$ is continuous, injective and $f(P_a)$ a union of elements in $\P$; \\
(2) there is a uniform constant $\gamma>1$ such that for all $a\in\mathcal A$, $|Df_a|\ge\gamma$. 

The \emph{repeller} of $f$, denoted by $\Lambda$, is the collection of points with all their forward iterates contained in $\P$, namely
    \[\Lambda:=\left\{x\in\X:f^k(x)\in\bigcup_{a\in\mathcal A}P_a\text{ for all $k\ge0$}\right\}.\]
We study the dynamics of $f:\Lambda\to\Lambda$, together with an ergodic invariant probability measure $\mu$ supported on $\Lambda$.
There is a shift system associated to $f$: let $M$ be an $\mathcal{A}\times \mathcal{A}$ matrix such that $M_{ab}=1$ if $f(P_a)\cap P_b\neq \emptyset$ and 0 otherwise. $f$ is \emph{topologically transitive} if for all $a,b\in\mathcal A$, there exists $k$ such that $M^k_{ab}>0$. Let $\Sigma$ denote the space of all \emph{infinite admissible words}, \ie
\[\Sigma:=\left\{{x}=(x_0,x_1,\dots)\in\mathcal A^{\N_0}:M_{x_k,x_{k+1}}=1,\,\forall \,k\ge0\right\}.\]
A natural choice of metric on $\Sigma$ is $d_s(x,y):=2^{-\inf\{j\ge0:\,x_j\neq y_j\}}$, and we define the projection map $\pi:\Sigma\to\Lambda$ by
$${x}=\pi\left(x_0,x_1,\dots\right) \text{ if and only if } x\in\bigcap_{i=0}^{\infty}f^{-i}P_{x_i}.$$ 
The dynamics on $\Sigma$ is the left shift $\sigma:\Sigma\to\Sigma$ given by $\sigma(x_0,x_1,\dots,)=(x_1,x_2,\dots)$, then $\pi$ defines a semi-conjugacy $f\circ\pi=\pi\circ\sigma$, and the corresponding symbolic measure $\tilde\mu$ of $\mu$ is given by $\mu=\pi_*\Tilde\mu$, \ie for all Borel-measurable set $B\in\mathcal B([0,1])$, $\mu(B)=\tilde\mu\left(\pi^{-1}B\right)$.

Denote $\mathcal{P}^n:=\bigvee_{j=0}^{n-1}f^{-j}\mathcal{P}$, each $P\in\mathcal P^n$ corresponds to an \emph{$n-$cylinder} in $\Sigma$: let $\Sigma_n\subseteq \mathcal A^n$ denote all finite words of length $n$ and for any $\textbf{i}\in\Sigma$, the $n$-cylinder defined by $\textbf{i}$ is 
\[[\textbf{i}]=[i_0,\dots,i_{n-1}]:=\left\{y\in\Sigma:y_j=i_j,\,j=0,\dots,n-1\right\},\]
then $\pi[i_0,i_1,\dots,i_{n-1}]=\bigcap_{j=0}^{n-1}f^{-j}P_{i_j}=:P_{\textbf{i}}$.

Furthermore, $(f,\mu)$ is required to have the following mixing property.
\begin{definition}\label{def: psi mixing}
    Say $\mu$ is \emph{exponentially $\psi$-mixing} if there are $C_1,\rho>0$ and a monotone decreasing function $\psi(k)\le C_1e^{-\rho k}$ for all $k\in\N$, such that the corresponding symbolic measure $\Tilde\mu$ satisfies: for all $n,k\in \N$, $\textbf{i}\in\Sigma_n$ and $\textbf{j}\in\Sigma^*=\bigcup_{l\ge1}\Sigma_l$,
    \[\left|\frac{\Tilde\mu([\textbf{i}]\cap\sigma^{-(n+k)}[\textbf{j}])}{\Tilde\mu[\textbf{i}]\Tilde\mu[\textbf{j}]}-1\right|\le \psi(k).\]
\end{definition}

\section{Examples}\label{sec: examples}
\Cref{thm: main theorem} is applicable to the following systems.
\begin{example}
    Finitely branched Gibbs-Markov maps: let $f$ be a topologically transitive piecewise expanding Markov map with $\mathcal A$ finite. $f$ is said to be \emph{Gibbs-Markov} if for some \emph{potential} $\phi:\Sigma\to\R$ which is \emph{locally H\"older} with respect to the symbolic metric $d_s$, there exists $G>0$ and $P\in\R$ such that for all $n\in\N$, all $x=(x_0,x_1,\dots)\in\Sigma$,
    \[\frac1G\le \frac{\tilde\mu([x_0,\dots,x_{n-1}])}{\exp\left(\sum_{j=0}^{n-1}\phi(\sigma^jx)-nP\right)}\le G.\]
For maps of this kind, $|Df|$ is uniformly bounded so for each ball at scale $r$, it is possible to approximate any ball with finitely many cylinders of the same depth (see for example the proof of \cite[Lemma 3.2]{JurTod24}), and by the Gibbs property of $\tilde\mu$, the asymptotic decay rate converges so $\dim_M(\mu)$ exists and is finite. Since Gibbs measures are exponentially $\psi$-mixing (see \cite[Proposition 1.14]{Bow75}), by \Cref{thm: finite minkowski dimension case}, we have
    \[\lim_{r\to0}\frac{\log\tau_r(x)}{-\log r}=\dim_M(\mu)\]
for $\mu$-a.e. $x$ in the repeller of $f$.
\end{example}

In the next example, when $r\to0$ at polynomial rate, $M_\mu(r)$ decays exponentially hence $\overline\dim_M(\mu)$ is infinite, and the stretched Minkowski dimensions are needed.
\begin{example}\label{Example: full-branched interval map}
Similar to \cite[Example 7.4]{JurTod24}, consider the following class of infinitely full-branched maps: pick $\kappa>1$ and set $c=\zeta(\kappa)=\sum_{n\in\N}\frac1{n^{\kappa}}$. Let $a_0=0$, $a_j=\sum_{j=1}^n\frac1{cj^{\kappa}}$ and define $f$ by
    \begin{equation*}
    \forall\,n\in\N_0=\N\cup\{0\},\,   f(x)=cn^\kappa(x-a_{n-1})\text{    for $x\in[a_{n-1},a_{n})=:P_{n}$.}
    \end{equation*}
    Then $f$ is an infinitely full-branched affine map, and we can associate this map with a full-shift system on $\N$: $x=\pi(i_0,i_1,\dots)$ if for all $j\ge1$, $f^j(x)\in P_{i_j}$. \\
    Let $\omega>1$ and construct $\tilde\mu$ the finite Bernoulli measure by
    \[\tilde\mu([i_0,\dots,i_{n-1}])=\prod_{j=0}^{n-1}\omega^{-i_j},\]
    so the push-forward measure $\mu=\pi_*\tilde\mu$ has $\mu(P_n)=\omega^{-n}$.
    \begin{proposition}
        $\overline{\dim}_M(\mu)=\infty$, but $\dim_M^s(\mu)=\frac1{\kappa-1}$.
    \end{proposition}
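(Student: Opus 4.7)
The plan is to compute $M_\mu(r)$ asymptotically and read off both dimensions; heuristically, the minimum-measure balls at scale $r$ are centred near the accumulation point $a_\infty = \sum_{n \ge 1}(cn^\kappa)^{-1} = 1$, where partition elements $P_n$ vanish in length like $n^{-\kappa}$ but carry exponentially small mass $\omega^{-n}$, forcing $M_\mu(r)$ to decay stretched-exponentially. For the upper bound I would take $x = 1 \in \supp\mu$ and set $N(r) := \min\{n : \sum_{j \ge n}(cj^\kappa)^{-1} \le r\}$. Using $\sum_{j \ge N} j^{-\kappa} \sim N^{-(\kappa-1)}/(\kappa-1)$ one gets $N(r) \asymp r^{-1/(\kappa-1)}$; since every $P_n$ with $n \ge N(r)$ lies in $(1-r, 1]$ and only $P_{N(r)-1}$ overlaps partially, $\mu(B(1, r)) \le \omega^{-N(r)+1}/(1-\omega^{-1})$, whence $M_\mu(r) \le C_1 \omega^{-c_1 r^{-1/(\kappa-1)}}$.

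For the matching lower bound I would argue by self-similarity: writing $x = \pi(i_1, i_2, \ldots)$, let $k \ge 0$ be maximal such that $B(x, r) \subset P_{i_1, \ldots, i_k}$. Because $f^k$ maps this cylinder affinely onto $[0, 1]$ with slope $c^k \prod_{j=1}^k i_j^\kappa$, the Bernoulli structure of $\mu$ gives
\[
\mu(B(x, r)) = \omega^{-(i_1 + \cdots + i_k)}\, \mu\bigl(B(f^k(x), r_k)\bigr), \qquad r_k := r \cdot c^k \prod_{j=1}^k i_j^\kappa,
\]
where by maximality the rescaled ball crosses a next-level cylinder boundary. Such a ball contains a one-sided neighbourhood of some endpoint $a_m$, and the tail cylinders accumulating at $a_m$ are governed by the same estimate as in the first paragraph, so $\mu(B(f^k(x), r_k)) \gtrsim \omega^{-C' r_k^{-1/(\kappa-1)}}$. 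The remaining algebra is to bound $(i_1 + \cdots + i_k) + r_k^{-1/(\kappa-1)}$ by $O(r^{-1/(\kappa-1)})$: the geometric constraint $c^k \prod i_j^\kappa \le 1/(2r)$ together with the observation that $\sum i_j$ is largest (for fixed product) when one factor absorbs the product reduces this to the scalar inequality $A^{1/\kappa} + r^{-1/(\kappa-1)} A^{-1/(\kappa-1)} \lesssim r^{-1/(\kappa-1)}$ on $A \in [1, 1/(2r)]$, whose maximum is attained at $A = 1$.

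Combining the two bounds yields $|\log M_\mu(r)| \asymp r^{-1/(\kappa-1)}$, so $\log|\log M_\mu(r)|/(-\log r) \to 1/(\kappa-1)$, establishing $\dim_M^s(\mu) = 1/(\kappa-1)$. Simultaneously, $\log M_\mu(r)/\log r \asymp r^{-1/(\kappa-1)}/|\log r| \to \infty$ as $r \to 0$, so $\overline{\dim}_M(\mu) = \infty$. I expect the main obstacle to lie in the uniform lower bound: one must ensure the self-similar recursion terminates with the tail term $r_k^{-1/(\kappa-1)}$ never dominating $r^{-1/(\kappa-1)}$, regardless of how the admissible depth $k$ and symbol sizes $i_1, \ldots, i_k$ are distributed along the coding of $x$.
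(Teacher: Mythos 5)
Your overall strategy—identifying that the minimum-measure ball at scale $r$ sits near the accumulation point $1$, computing $M_\mu(r)$ up to two-sided constants in the exponent, and reading off both dimensions from $|\log M_\mu(r)|\asymp r^{-1/(\kappa-1)}$—matches the paper's. The upper bound on $M_\mu(r)$ via $B(1,r)$ and the tail estimate $\sum_{j\ge N}j^{-\kappa}\sim N^{-(\kappa-1)}/(\kappa-1)$ is essentially identical to the paper's use of the sequence $r_n=\tfrac12(1-a_{n-1})$. Where you diverge is the lower bound on $M_\mu(r)$: the paper simply asserts that ``the ball that realises $M_\mu(r_n)$ is contained in $\bigcup_{j\ge n}P_j$'' and reads off $\omega^{-n}\le M_\mu(r_n)$ without further justification, then interpolates between the $r_n$; you instead renormalise by the maximal cylinder $P_{i_1\cdots i_k}$ containing the ball, use the exact Bernoulli identity $\mu(B(x,r))=\omega^{-\sum i_j}\mu(B(f^kx,r_k))$, and reduce the exponent bound to the scalar inequality $A^{1/\kappa}+r^{-1/(\kappa-1)}A^{-1/(\kappa-1)}\lesssim r^{-1/(\kappa-1)}$ on $A\in[1,1/(2r)]$, which is correct (the function is minimised at an interior point $A^*\asymp r^{-\kappa/(2\kappa-1)}$, so the maximum is at the boundary $A=1$). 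This is the genuinely new content: you are proving the step the paper elides, and you correctly flag it as the main obstacle.

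Two small points you would need to tighten when fleshing out the sketch. First, the claim $\mu(B(f^kx,r_k))\gtrsim\omega^{-C'r_k^{-1/(\kappa-1)}}$ for a ball crossing a cylinder boundary $a_m$ needs an a priori bound on $m$: take $m_0=\min\{m:a_m\in B(f^kx,r_k)\}$, so the left endpoint of the ball lies in $P_{m_0}$; if $m_0>N(r_k)$ then $a_{m_0-1}>1-r_k$ forces the centre past $1$, so in fact $m_0\le N(r_k)\asymp r_k^{-1/(\kappa-1)}$ automatically. Second, the two sides of $a_{m_0}$ behave differently: a one-sided neighbourhood from the left sits in $P_{m_0}$ and rescales to a neighbourhood of $1$ (stretched-exponential decay), whereas a neighbourhood from the right sits in $P_{m_0+1}$ and rescales to a neighbourhood of $0$, where $\mu((0,t))\gtrsim t^\beta$ decays only polynomially; both cases stay inside the budget $\omega^{-Cr_k^{-1/(\kappa-1)}}$ once $m_0\le N(r_k)$ is in hand, but they should be separated. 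With these details supplied the argument is sound and actually more complete than the one in the paper.
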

    \begin{proof}
            For each $r>0$, $M_\mu(r)$ is found near $1$, then along the sequence $r_n=\frac{1}{2c}\sum_{j\ge n}{j^{-\kappa}}\approx\frac{1}{2c(\kappa-1)n^{\kappa-1}}$, the ball that realises $M_\mu(r_n)$ is contained in $\bigcup_{j=n}^\infty P_j$, hence 
            \[\omega^{-n}\le M_\mu(r_n)\le \frac{\omega^{-n}}{1-\omega^{-1}},\]  
            therefore 
            \[\overline{\dim}_M(\mu)\ge\limsup_{n\to\infty}\frac{n\log \omega}{(\kappa-1)\log n} =\infty,\]
            whereas for all $n$,
            \[\frac{\log n}{(\kappa-1)\log n}\le\frac{\log|\log M_\mu(r_n)|}{-\log r_n}\le \frac{\log n+\log\log \omega}{(\kappa-1)\log n-\log (2c(\kappa-1))}.\]
            As for all $r>0$, there is unique $n\in\N$ such that $r_{n+1}\le r <r_n$ while $\lim_{n\to\infty}\frac{\log r_{n+1}}{\log r_n}=1$, one concludes with $\dim^{s}_M(\mu)=\frac{1}{\kappa-1}$.\qedhere
    \end{proof}
As in \cite[Example 7.4]{JurTod24} it is very difficult for the system to cover small neighbourhoods of 1 so \Cref{thm: finite minkowski dimension case} says $\limsup_{r\to0}\frac{\log \tau_r(x)}{-\log r}\ge\overline\dim_M(\mu)=\infty$, but since $\tilde\mu$ is Bernoulli hence $\psi$-mixing, \Cref{thm: main theorem} asserts that
        \[\lim_{r\to0}\frac{\log\log\tau_r(x)}{-\log r}=\frac1{\kappa-1}\text{\hspace{3mm}$\mu$-a.e.}\]
\end{example}

\section{Proof of \Cref{thm: main theorem}}\label{sec: proofs of main theorems}
The proofs in this section are adapted from those of \cite[Proposition 3.1, 3.2]{BarJurKlo}. We will only demonstrate the proofs for \Cref{thm: main theorem}, \ie the asymptotics are determined by stretched Minkowski dimensions; the proofs for \Cref{thm: finite minkowski dimension case} are obtained by replacing all stretched exponential sequences in the proofs below by some exponential sequence, \eg for a given constant $s\in\R$, $e^{\pm n^{s}}$ will be replaced by $2^{\pm ns}$. 

Assuming the inequalities in \eqref{eqn: main theorem without mixing}, we first prove the \eqref{eqn: main theorem with mixing} which requires the exponentially $\psi$-mixing condition.

\begin{remark}\label{remark subsequence trick}
    Assuming the conditions of \Cref{thm: main theorem}, we will prove that the statements hold along the subsequence $r_n=n^{-1}$ such that for each $r>0$ there is a unique $n\in\N$ with $r_{n+1}<r\le r_n$ while $\lim_{n\to\infty}\frac{\log r_{n+1}}{\log r_n}=1$ (if $\overline{\dim}_M(\mu)$ or $\underline{\dim}_M(\mu)$ are finite we choose $r_n=2^{-n}$ instead), and since $\log\tau_{r}(x)$ is increasing as $r\to0$,
    \[\limsup_{n\to\infty}\frac{\log\log\tau_{r_n}(x)}{-\log r_n}=\limsup_{r\to0}\frac{\log\log\tau_r(x)}{-\log r},\]and similarly for liminf's.
\end{remark}

\subsection{Proof of \eqref{eqn: main theorem with mixing}}
\begin{proposition}\label{Prop upper bound 1}
    Suppose $(f,\mu)$ is exponentially $\psi$-mixing, and the upper stretched Minkowski dimension of $\mu$, $\overline\dim_M^s(\mu)$, is finite, 
    then for $\mu$-almost every $x\in\Lambda$, 
    \[\limsup_{n\rightarrow\infty}\frac{\log \log\tau_{r}(x)}{-\log r}\le \overline\dim^s_M(\mu).\]
\end{proposition}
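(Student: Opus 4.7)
The plan is to prove the estimate along the subsequence $r_n = 1/n$ (the Remark then transfers it to all $r \to 0$) by a Borel--Cantelli argument. Fix $\epsilon > 0$, write $\bar s := \overline{\dim}_M^s(\mu)$, and set $T_n := \lceil \exp(n^{\bar s + 2\epsilon}) \rceil$, so that $\log\log T_n/\log n \to \bar s + 2\epsilon$. The aim is to show that for $\mu$-a.e.\ $x$ one has $\tau_{r_n}(x) \le T_n$ for all sufficiently large $n$; letting $\epsilon \to 0$ then yields the proposition.

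The first step is to build an efficient $r_n$-cover of $\Lambda$. I would take a maximal $(r_n/4)$-separated set $\{y_i\}_{i=1}^{N_n}$ in $\supp(\mu)$, so that the balls $B(y_i, r_n/4)$ are pairwise disjoint with $\mu$-measure at least $M_\mu(r_n/4)$, while $\{B(y_i, r_n/2)\}$ covers $\Lambda$. This gives $N_n \le M_\mu(r_n/4)^{-1}$, and by the definition of the upper stretched Minkowski dimension, $M_\mu(r_n/4) \ge \exp(-C n^{\bar s + \epsilon})$ for $n$ large. Hence $N_n \le \exp(C n^{\bar s + \epsilon})$ and $\mu(B(y_i, r_n/2)) \ge \exp(-C n^{\bar s + \epsilon})$. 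Observe that $\tau_{r_n}(x) \le T_n$ whenever the orbit $\{f^j(x)\}_{j=0}^{T_n}$ meets every ball $B(y_i, r_n/2)$.

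The central technical step is to turn the $\psi$-mixing hypothesis (\Cref{def: psi mixing}), which is phrased on cylinders, into a non-return estimate for balls. Since $|Df| \ge \gamma > 1$, each ball $B_i := B(y_i, r_n/2)$ is sandwiched between two finite unions of cylinders of depth $m_n \sim \log n/\log\gamma$, with comparable measure. Partitioning $\{0, 1, \ldots, T_n\}$ into roughly $T_n/L_n$ blocks of length $L_n = \Theta(m_n)$ and iterating the $\psi$-mixing inequality applied to these cylinder approximations should give
\[
\mu\bigl(\{x : f^j(x) \notin B_i \text{ for all } 0 \le j \le T_n\}\bigr) \le C_1 \exp\bigl(-c_1\, T_n\, \mu(B_i)\bigr),
\]
with constants $C_1, c_1 > 0$ independent of $i, n$, provided $L_n$ is chosen so that $\psi(L_n)^{T_n/L_n}$ is bounded (exponential decay of $\psi$ makes this easy once $L_n$ grows at least logarithmically in $T_n$).

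Combining these ingredients via a union bound,
\[
\mu\{x : \tau_{r_n}(x) > T_n\} \le N_n \cdot C_1 \exp\bigl(-c_1 T_n M_\mu(r_n/4)\bigr) \le C_1 \exp\!\bigl(C n^{\bar s + \epsilon}\bigr)\exp\!\Bigl(-c_1 \exp\bigl(n^{\bar s + 2\epsilon} - C n^{\bar s + \epsilon}\bigr)\Bigr),
\]
which decays super-exponentially and is therefore summable in $n$. Borel--Cantelli then gives $\tau_{r_n}(x) \le T_n$ eventually for $\mu$-a.e. $x$, whence $\limsup_n \log\log \tau_{r_n}(x)/\log n \le \bar s + 2\epsilon$, and letting $\epsilon \to 0$ together with the Remark concludes the proof. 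The main obstacle, as flagged above, is the cylinder-to-ball passage in the $\psi$-mixing step: one must check that the geometric approximation of $B_i$ and the multiplicative error from $\psi(L_n)$ combine cleanly into an exponential-in-$T_n \mu(B_i)$ non-return bound uniformly over all $N_n$ balls.
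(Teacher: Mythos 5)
Your overall strategy mirrors the paper's: approximate balls by cylinders, subsample the orbit at a spacing designed to make the iterated $\psi$-mixing error controllable, union-bound over a ball cover of $\Lambda$, then apply Borel--Cantelli. However, there is an internal inconsistency in your choice of the sampling gap $L_n$, and it is exactly the point where the argument lives or dies.

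You set $L_n=\Theta(m_n)=\Theta(\log n)$, the cylinder depth needed so that a ball of radius $\sim 1/n$ can be sandwiched by unions of $m_n$-cylinders. But you then iterate the mixing inequality over $T_n/L_n$ blocks with $T_n=\lceil\exp(n^{\bar s+2\epsilon})\rceil$, and for the accumulated factor $(1+\psi(L_n-m_n))^{T_n/L_n}$ to stay bounded you need $\psi(L_n-m_n)\lesssim L_n/T_n$, i.e.\ $L_n-m_n\gtrsim\frac{1}{\rho}\log T_n=\frac{1}{\rho}n^{\bar s+2\epsilon}$. This is \emph{polynomially} large in $n$, not $\Theta(\log n)$. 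Your parenthetical "once $L_n$ grows at least logarithmically in $T_n$" already recognises this, but it directly contradicts the earlier $L_n=\Theta(m_n)$ and you never reconcile the two scales. The paper keeps them separate from the start: it sets $L'(k)=L(k)+\frac{1}{\rho}\left(k^{\overline\alpha+\epsilon}-\log C_1\right)$, with $L(k)\sim\log k/\log\gamma$ being the cylinder depth and the additional polynomial summand being the mixing gap, and subsamples at times $jL'(4n)$ with $j\le e^{n^{\overline\alpha+\epsilon}}$. Relatedly, your claimed non-return bound $\exp(-c_1 T_n\mu(B_i))$ should really be $\exp(-c_1(T_n/L_n)\mu(B_i))$; since $L_n$ must grow like $n^{\bar s+2\epsilon}$, it cannot be absorbed into a constant $c_1$. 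Fortunately $L_n$ is only polynomial while $T_n\mu(B_i)\gtrsim \exp(n^{\bar s+2\epsilon}-Cn^{\bar s+\epsilon})$ is stretched-exponential, so the quotient is still stretched-exponentially large and summability goes through once the bookkeeping is corrected.

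One cosmetic difference that is not a problem: you bound the number of cover elements by $N_n\le M_\mu(r_n/4)^{-1}\le e^{Cn^{\bar s+\epsilon}}$ via a packing argument, whereas the paper simply takes a cover of $[0,1]$ by at most $n$ balls of radius $1/(2n)$. Your bound is cruder, but since the per-ball non-return probability decays like $\exp(-e^{n^{\bar s+\epsilon}})$, the difference is immaterial.
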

\begin{proof}
Let $\eps>0$, and for simplicity denote $\overline\alpha:=\overline{\dim}^s_M(\mu)$. 

For any finite $k$-word $\textbf{i}=x_0,\dots,x_{k-1}\in\Sigma_k$, let $\textbf{i}^-=x_0,\dots,x_{k-2}$, \ie $\textbf{i}$ dropping the last digit. Recall that for each $\textbf{i}\in\Sigma^*$, $P_\textbf{i}=\pi[\textbf{i}]$, and we define 
\[\mathcal W_r:=\{\textbf{i}\in\Sigma^*:\,\text{diam}(P_{\textbf{i}})\le r<\text{diam}(P_{\textbf{i}^-})\}.\]
By expansion, for each $n\in\N$, the lengths of the words in $\mathcal W_{n^{-1}}$ are bounded from above, hence we can define
$$L(n):=\frac{\log n}{\log\gamma}+1
\ge\max\{|\textbf{i}|:\textbf{i}\in \mathcal W_{n^{-1}}\}.$$ 
Given $y\in[0,1]$ and $r>0$ such that $B(y,r)\subset \supp(\mu)$, define the corresponding symbolic balls by \[\Tilde B(y,r):=\left\{[\textbf{i}]:\textbf{i}\in \mathcal W_r,P_{\textbf{i}}\cap B(y,r)\neq \emptyset\right\}.\] 
Note that if for some $x\in P_{\textbf{i}}$ and $[\textbf{i}]\in\Tilde{B}(y,r)$, $d(x,y)\le r+diam(P_\textbf{i})\le 2r$ therefore $$B(y,r)\subset \pi \Tilde B(y,r)\subset B(y,2r).$$
    
Let $\Q_{n}$ be a cover of $\Lambda$ with balls of radius $r_n=1/2n$, denote the collection of their centres by $\mathcal Y_n$, and  $\#\Q_n=\#\mathcal Y_n\le n$. Let $\tau(\Q_n,x)$ be the minimum time for the orbit of $x$ to have visited each element of $\Q_n$ at least once, 
\[\tau(\Q_n,x):=\min\left\{k\in\N:\text{ for all }Q\in\Q_n,\text{ exists }0\le j\le k: f^j(x)\in Q\right\}.\]
Then $\tau_{1/n}(x)\le \tau(\Q_n,x)$ for all $n$ and all $x$ since for all $y\in\Lambda,$ there is $Q\in\Q_n$ and $j\le \tau(\Q_n,x)$ such that $f^j(x),y\in Q$ hence $d(f^j(x),y)\le 1/n.$  Let $\eps>0$ be an arbitrary number and for each $k\in\N$, set $L'(k)=L(k)+\frac1{\rho}\left(k^{\overline\alpha+\eps}-\log C_1\right)$ where $C_1$, $\rho$ were given in \Cref{def: psi mixing}, then
    \begin{equation}
        \begin{split}\label{eqn: upper bound sum 1}
        &\mu\left(x:\tau_{1/n}(x)>e^{n^{\overline\alpha+\eps}}L'(4n)\right)\le \mu\left(x:\tau(\Q_n,x)>e^{n^{\overline\alpha+\eps}}L'(4n)\right)\\
        &=\mu\left(x:\,\exists y\in\mathcal Y_n:f^j(x)\not\in B(y,{1}/{2n}),\,\forall j\le e^{n^{\overline\alpha+\eps}}L'({2n})\right)\\
        &\le \mu\left(x:\exists y\in\mathcal Y_n:f^{jL'(4n)}(x)\not \in B(y,1/{2n}),\,\forall j\le e^{n^{\overline\alpha+\eps}}\right)\\
        &=\mu\left(\bigcup_{y\in\mathcal Y_n}\bigcap_{j=1}^{e^{n^{\overline\alpha+\eps}}}f^{-jL'(4n)}(x)\not\in B(y,1/2n)\right)\le \sum_{y\in\mathcal Y_n}\mu\left(\bigcap_{j=1}^{e^{n^{\overline\alpha+\eps}}}f^{-jL'(
    2n)}(x)\not\in B(y,1/2n)\right).
    \end{split}
    \end{equation}
As $\pi\left(\tilde B(z,r)\right)\subseteq B(z,2r)$ for all $z$ and all $r>0$, using the exponentially $\psi$-mixing property of $\tilde\mu$, \ie $\psi(k)\le C_1e^{-\rho k}$ for all $k\in\N$, by our choice of $L'(4n)$,
\begin{equation}
\begin{split}\label{eqn: upper bound sum 2}
    &\sum_{y\in\mathcal Y_n}\mu\left(x:\bigcap_{j=1}^{e^{n^{\overline\alpha+\eps}}}f^{-jL'(4n)}(x)\not\in B(y,1/2n)\right)\le \sum_{y\in\mathcal Y_{k+1}}\Tilde\mu\left(x:\bigcap_{j=1}^{e^{n^{\overline\alpha+\eps}}}\sigma^{-jL'(4n)}(\pi^{-1}x)\not\in \Tilde B\left(y,1/4n\right)\right)\\
    &\le \left(1+\psi\left(\frac1{\rho}\left((4n)^{\overline\alpha+\eps}-\log C_1\right)\right)\right)^{e^{n^{\overline\alpha+\eps}}}\sum_{y\in\mathcal{Y}_{k+1}}\left(1-\Tilde\mu\left(\Tilde B\left(y,\frac1{2n}\right)\right)\right)^{e^{n^{\overline\alpha+\eps}}}\\
    &\le \left(1+e^{-n^{\overline\alpha+\eps}}\right)^{e^{n^{\overline\alpha+\eps}}}\sum_{y\in\mathcal{Y}_{k+1}}\left(1-\mu\left(B\left(y,\frac{1}{4n}\right)\right)\right)^{e^{n^{\overline\alpha+\eps}}}.
\end{split}
\end{equation}
    By definition of $\overline\alpha$, for all $n$ large such that $\frac\eps4\log n\ge \left(\overline\alpha+\frac\eps4\right)\log 4$, there is
    $$\log\left(-\log M_\mu\left(\frac1{4n}\right)\right)\le \overline{\alpha}+\eps/4(\log 4n)\le (\overline\alpha+\eps/2)\log n,$$
    so for all $y\in\supp(\mu)$ and all $n$ large,
    \[\mu\left(B\left(y,\frac1{4n}\right)\right)\ge e^{-n^{\overline\alpha+\eps/2}}\ge \frac{e^{-n^{\eps/2}}}{e^{n^{\overline\alpha+\eps}}}.\]
As for all $u\in\R$ and large $k$, $(1+\frac uk)^{k}\approx e^u$, combining \eqref{eqn: upper bound sum 1} and \eqref{eqn: upper bound sum 2}, for some uniform constant $C_2>0$,
\begin{align*}
    &\mu\left(x:\tau_{1/n}(x)>e^{n^{\overline\alpha+\eps}}L'(4n)\right)\le 
        \left(1+e^{-n^{\overline\alpha+\eps}}\right)^{e^{n^{\overline\alpha+\eps}}}\sum_{y\in\mathcal{Y}_{k+1}}\left(1- e^{-n^{\overline\alpha+\eps/2}}\right)^{e^{n^{\overline\alpha+\eps}}}\\
    &\le \left(1+e^{-n^{\overline\alpha+\eps}}\right)^{e^{n^{\overline\alpha+\eps}}}n\left(1-\frac{e^{n^{\eps/2}}}{e^{n^{\overline\alpha+\eps}}}\right)^{e^{n^{\overline\alpha+\eps}}}\le C_2\exp\left(\log n-{e^{n^{\eps/2}}}\right),
\end{align*}
which is clearly summable over $n$. Then by Borel Cantelli, for all $n$ large enough $\tau_{1/n}(x)\le e^{n^{\overline\alpha+\eps}}L'(4n)$. Since $\log L'(4n)\approx (\overline\alpha+\eps)\log n\ll n^{\overline\alpha+\eps}$, we have for $\mu-$a.e. $x\in\Lambda$,
\[\limsup_{n\to\infty}\frac{\log\log\tau_{1/n}(x)}{\log n}\le\limsup_{n\to\infty}\frac{\log\log \left(e^{n^{\overline\alpha+\eps}}L'(4n)\right)}{\log n}\le \overline{\alpha}+\eps.\] 
By \Cref{remark subsequence trick} this upper bound for $\limsup$ holds for all sequences decreasing to 0 and as $\eps>0$ was arbitrary, sending it to 0 one obtains that for $\mu-$a.e. $x\in\Lambda$,
\[\limsup_{r\to0}\frac{\log\log\tau_r(x)}{-\log r}=\limsup_{n\to\infty}\frac{\log\log\tau_{1/n}}{\log n}\le \overline{\alpha}.\qedhere\]
\end{proof}

\begin{proposition}\label{Prop lower bound 1}
 Suppose $(f,\mu)$ is exponentially $\psi$-mixing and the lower stretched Minkowski dimension of $\mu$, $\underline\dim_M^s(\mu)$, is finite, 
then for $\mu$-a.e. $x\in\Lambda$,
    \[\liminf_{r\to0}\frac{\log\log \tau_r(x)}{-\log r}\le \underline{\dim}_M^s(\mu).\]

\end{proposition}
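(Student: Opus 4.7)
The plan is to mirror the proof of \Cref{Prop upper bound 1}, but this time restricted to a subsequence of scales along which the liminf in the definition of $\underline\alpha:=\underline{\dim}_M^s(\mu)$ is realized. The essential difference is that for the $\limsup$ statement one controls $M_\mu(r)$ from below for \emph{every} large $n$, whereas for the $\liminf$ we only need such control along \emph{some} sparse sequence $n_k\to\infty$.

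First, fix $\eps>0$ and, using the definition of $\underline\alpha$, extract an increasing sequence of integers $(n_k)_{k\in\N}$ with
\[\log\bigl|\log M_\mu(1/n_k)\bigr|\le (\underline\alpha+\eps/2)\log n_k\qquad\text{for all large }k,\]
so that $M_\mu(1/n_k)\ge \exp(-n_k^{\underline\alpha+\eps/2})$. As in \Cref{Prop upper bound 1}, cover $\Lambda$ by at most $n_k$ balls $B(y,1/(2n_k))$ with centres $y\in\mathcal Y_{n_k}$, set the mixing window $L'(4n_k)=L(4n_k)+\frac1\rho\bigl(n_k^{\underline\alpha+\eps}-\log C_1\bigr)$, and the trial count $T_k=\exp(n_k^{\underline\alpha+\eps})$.

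Second, re-run the chain of estimates \eqref{eqn: upper bound sum 1}--\eqref{eqn: upper bound sum 2} verbatim, replacing $\overline\alpha$ by $\underline\alpha$ and $n$ by $n_k$. The exponential $\psi$-mixing hypothesis, together with the choice of $L'(4n_k)$, turns the $T_k$ time-shifted events into near-independent trials, each with failure probability at most $1-\exp(-n_k^{\underline\alpha+\eps/2})$. A union bound over the $\le n_k$ balls yields, for some constant $C_2>0$,
\[\mu\bigl(x:\tau_{1/n_k}(x)>T_kL'(4n_k)\bigr)\le C_2\exp\bigl(\log n_k-e^{n_k^{\eps/2}}\bigr).\]
This bound decays super-exponentially in $n_k$, hence is summable over $k$ no matter how sparsely the subsequence $(n_k)$ is chosen. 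Borel--Cantelli then gives $\tau_{1/n_k}(x)\le T_kL'(4n_k)$ for $\mu$-a.e. $x$ and all $k$ large, so
\[\liminf_{k\to\infty}\frac{\log\log\tau_{1/n_k}(x)}{\log n_k}\le \underline\alpha+\eps.\]
Intersecting the resulting full-measure sets over $\eps=1/m$, $m\in\N$, and invoking \Cref{remark subsequence trick} to pass from the sequence $1/n_k$ to $r\to 0$, one concludes that $\liminf_{r\to 0}\log\log\tau_r(x)/(-\log r)\le\underline\alpha$ for $\mu$-a.e. $x$.

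The only point requiring care---and the main potential obstacle---is verifying that the Borel--Cantelli step survives along a possibly very sparse subsequence $(n_k)$; the bound $\exp(\log n_k-e^{n_k^{\eps/2}})$ is however summable along every subsequence of $\N$, so this is automatic and no sparsity condition on $(n_k)$ needs to be imposed. All remaining ingredients (the symbolic approximation $\pi\tilde B(y,r)\subseteq B(y,2r)$, the choice of $L'$ compatible with the $\psi$-mixing rate, and the elementary inequality $(1+u/k)^k\approx e^u$) are exactly as in \Cref{Prop upper bound 1}.
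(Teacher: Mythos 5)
Your proof is correct and takes essentially the same route as the paper: extract a subsequence $(n_k)$ realizing the liminf in $\underline{\dim}_M^s(\mu)$, re-run the estimates from Proposition~\ref{Prop upper bound 1} along that subsequence, apply Borel--Cantelli (valid since the tail bound is summable over any subsequence), and deduce the bound on $\liminf_{r\to0}$. The one small slip is invoking Remark~\ref{remark subsequence trick} in the final step: that remark concerns passing from the full sequence $1/n$ to $r\to0$ via $\log r_{n+1}/\log r_n\to1$, which need not hold for a sparse subsequence; what is actually needed (and what the paper uses) is just the elementary fact that $\liminf_{r\to0}$ of a function is bounded above by the liminf along any subsequence tending to $0$.
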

\begin{proof}
Again for simplicity, denote $\underline\alpha:=\underline{dim}^s_M(\mu)$.
Let $\eps>0$ and by definition of liminf there is a subsequence $\{n_k\}_k\to\infty$ such that for all $k$,
\[\frac{\log (-\log M_\mu(1/n_k))}{\log n_k }\le \underline{\alpha}+\eps,\]
then repeating the proof of \Cref{Prop upper bound 1} by replacing $n$ by $n_k$  everywhere, one gets that for $\mu-$almost every $x$, 
\[\liminf_{k\to\infty}\frac{\log\log\tau_{1/n_k}(x)}{\log n_k}\le\underline{\alpha}+\eps.\]
Again send $\eps\to0$, and use the fact that liminf over the entire sequence is no greater than the liminf along any subsequence, the proposition is proved.
\end{proof}


\subsection{Proof of the inequalities \eqref{eqn: main theorem without mixing}}
\begin{proposition}\label{Prop lower bound 2}
For $\mu$-almost every $x\in\Lambda$,
    \[\liminf_{n\to\infty}\frac{\log\log \tau_{r}(x)}{-\log r}\ge\underline\dim^s_M(\mu).\]
\end{proposition}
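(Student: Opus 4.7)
The plan is to apply Borel--Cantelli to a first-moment bound on hitting times, targeting the ball that realises the minimum $\mu$-measure at each scale; no mixing is needed for this direction. For any Borel set $B\subseteq\Lambda$ and any $N\in\N$, invariance of $\mu$ together with countable subadditivity yield the crude bound
\[
\mu\bigl(x:\tau_B(x)\le N\bigr)=\mu\!\left(\bigcup_{k=0}^{N}f^{-k}B\right)\le(N+1)\mu(B),
\]
where $\tau_B(x):=\inf\{k\ge0:f^k(x)\in B\}$ is the hitting time to $B$. Since covering $\Lambda$ at scale $r$ requires the orbit of $x$ to enter every ball $B(y,r)$ with $y\in\Lambda$, we have $\tau_r(x)\ge\tau_{B(y,r)}(x)$ for each such $y$; in particular, if $y_n\in\supp(\mu)$ realises $\mu(B(y_n,1/n))=M_\mu(1/n)$, then
\[
\mu\bigl(x:\tau_{1/n}(x)\le N\bigr)\le(N+1)\,M_\mu(1/n).
\]

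Now write $\underline\alpha:=\underline{\dim}_M^s(\mu)$, fix $\eps\in(0,\underline\alpha/2)$, and set $N_n:=\lceil\exp(n^{\underline\alpha-\eps})\rceil$. By \Cref{remark subsequence trick} it suffices to work along $r_n=1/n$. The definition of $\underline\alpha$ forces $\log|\log M_\mu(1/n)|\ge(\underline\alpha-\eps/2)\log n$ for all $n$ sufficiently large, whence $M_\mu(1/n)\le\exp(-n^{\underline\alpha-\eps/2})$. Inserting this into the preceding display gives
\[
\mu\bigl(x:\tau_{1/n}(x)\le N_n\bigr)\le 2\exp\!\bigl(n^{\underline\alpha-\eps}-n^{\underline\alpha-\eps/2}\bigr),
\]
and since $\underline\alpha-\eps/2>\underline\alpha-\eps>0$ the exponent tends to $-\infty$ super-polynomially in $n$, making the right-hand side summable.

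Borel--Cantelli then yields, for $\mu$-a.e.\ $x\in\Lambda$, an integer $n_0(x)$ with $\tau_{1/n}(x)>\exp(n^{\underline\alpha-\eps})$ for every $n\ge n_0(x)$, so
\[
\liminf_{n\to\infty}\frac{\log\log\tau_{1/n}(x)}{\log n}\ge\underline\alpha-\eps.
\]
Applying the monotonicity interpolation of \Cref{remark subsequence trick} lifts this lower bound from the subsequence $\{1/n\}$ to $\liminf_{r\to0}\log\log\tau_r(x)/(-\log r)\ge\underline\alpha-\eps$, and letting $\eps\downarrow0$ through a countable sequence completes the proof. The argument is essentially routine first-moment machinery; the only care needed is to calibrate $N_n$ so that it stays strictly below $M_\mu(1/n)^{-1}$ yet remains exponentially large in $n^{\underline\alpha-\eps}$, which is precisely the room afforded by the definition of the stretched Minkowski dimension.
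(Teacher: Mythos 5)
Your proposal is correct and follows essentially the same route as the paper: both arguments bound the cover time below by the hitting time to the ball realising $M_\mu(1/n)$, control the probability of an early hit via the union bound and invariance, and then invoke Borel--Cantelli along the sequence $r_n=1/n$. The only difference is a minor calibration: the paper takes $N_n=e^{n^{\underline\alpha-\eps}}/n^2$ paired with the bound $M_\mu(1/n)\le e^{-n^{\underline\alpha-\eps}}$ so that the product is exactly $1/n^2$, whereas you take $N_n=\lceil e^{n^{\underline\alpha-\eps}}\rceil$ and instead exploit the slack between the exponents $n^{\underline\alpha-\eps}$ and $n^{\underline\alpha-\eps/2}$ to obtain summability; both are valid.
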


\begin{proof}
We continue to use the notation $\underline\alpha=\underline\dim^s_M(\mu)$.
Let $\eps>0$ be arbitrary and by definition of $\underline\alpha$ for all large $n$ there exists $y_n\in \supp (\mu)$ such that $\mu(B(y_n,1/n))\le e^{-n^{\overline\alpha-\eps}}$. Let 
\[T(x,y,r):=\inf\left\{j\ge0:f^j(x)\in B(y,r)\right\},\]
so for all $n\in\N$ and all $x$, $\tau_{1/n}(x)\ge T(x,y_n,1/n)$. Then by invariance,
\begin{align*}
    &\mu\left(x:\tau_{1/n}(x)<  e^{n^{\overline\alpha-\eps}}/n^2\right)\le \mu\left(x:T(x,y_n,1/n)<   e^{n^{\overline\alpha-\eps}}/n^2\right)\\
    &=\mu\left(x:\exists\,0\le j<  e^{n^{\overline\alpha-\eps}}/n^2:\,f^j(x)\in B(y_n,1/n)\right)\le \bigcup_{j=0}^{ e^{n^{\overline\alpha-\eps}}/n^2}\mu\left(x:f^j(x)\in B(y_n,1/n)\right)\\
    &\le \sum_{j=0}^{e^{n^{\overline\alpha-\eps}}/n^2}\mu\left(f^{-j}B\left(y_n,\frac1n\right)\right)\le\frac{ e^{n^{\overline\alpha-\eps}}}{n^2} e^{-n^{\overline\alpha-\eps}}=\frac1{n^2},
\end{align*}
which is summable. By Borel-Cantelli, since $2\log n\ll n^{\underline \alpha-\eps}$, for $\mu$-almost every $x$
\[\liminf_{n\to\infty}\frac{\log\log \tau_{1/n}(x)}{\log n}\ge\underline\alpha-\eps,\]
and since $\eps>0$ was arbitrary one can send it to 0.\qedhere
\end{proof} 

Similar to \Cref{Prop upper bound 1} and \Cref{Prop lower bound 1},
\begin{proposition}
    For $\mu$-almost every $x\in\Lambda$,
    \[\limsup_{r\to0}\frac{\log\log \tau_r(x)}{-\log r}\ge\overline{\dim}_M^s(\mu).\]
\end{proposition}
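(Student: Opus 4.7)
The plan is to mirror the argument of \Cref{Prop lower bound 2}, but instead of working along the full sequence $r = 1/n$ I will extract a subsequence $1/n_k$ on which the decay of $M_\mu$ is (almost) as fast as the $\overline\alpha := \overline\dim_M^s(\mu)$ rate. The point is that a lower bound on $\limsup$ only requires good behaviour along \emph{some} subsequence, exactly the regime that the definition of $\overline\alpha$ supplies.

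First I would fix $\eps > 0$ and use the definition of $\overline\alpha$ to extract $n_k \to \infty$ with
\[\frac{\log\bigl(-\log M_\mu(1/n_k)\bigr)}{\log n_k} \;\ge\; \overline\alpha - \eps\qquad\text{for all $k$,}\]
and, by thinning if necessary, I would also arrange $n_k \ge 2^k$, so that $\sum_k 1/n_k^2 < \infty$. For each such $k$ pick $y_k \in \supp(\mu)$ realising the minimum, so that $\mu(B(y_k,1/n_k)) \le e^{-n_k^{\overline\alpha - \eps}}$. Then the identical union-bound and invariance estimate as in \Cref{Prop lower bound 2}, with $T(x,y_k,1/n_k) \le \tau_{1/n_k}(x)$, yields
\[\mu\!\left(x : \tau_{1/n_k}(x) < e^{n_k^{\overline\alpha - \eps}}/n_k^{2}\right) \;\le\; \frac{e^{n_k^{\overline\alpha-\eps}}}{n_k^{2}} \cdot e^{-n_k^{\overline\alpha-\eps}} \;=\; \frac{1}{n_k^{2}}.\]

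The summability $\sum_k 1/n_k^2 < \infty$ lets me apply Borel--Cantelli along the subsequence: for $\mu$-a.e. $x\in\Lambda$, for all sufficiently large $k$, $\tau_{1/n_k}(x) \ge e^{n_k^{\overline\alpha-\eps}}/n_k^{2}$. Since $2\log n_k \ll n_k^{\overline\alpha-\eps}$, taking $\log\log$ and dividing by $\log n_k$ gives
\[\limsup_{k\to\infty} \frac{\log\log \tau_{1/n_k}(x)}{\log n_k} \;\ge\; \overline\alpha - \eps.\]
Using that $\limsup$ over the full sequence dominates $\limsup$ over any subsequence, combined with the subsequence reduction of \Cref{remark subsequence trick}, we get $\limsup_{r\to 0} \log\log\tau_r(x)/(-\log r) \ge \overline\alpha - \eps$, and sending $\eps \to 0$ concludes the proof.

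There is no real obstacle here beyond the bookkeeping: the main subtlety is the asymmetry between $\liminf$ and $\limsup$ — in \Cref{Prop lower bound 2} one uses that the definition of $\underline\alpha$ gives the required decay for \emph{all} large $n$, whereas here it only holds on a sparse subsequence, which is however precisely what a $\limsup$ lower bound needs. The only place where care is needed is ensuring the extracted subsequence $n_k$ grows fast enough for Borel--Cantelli, which is a free parameter.
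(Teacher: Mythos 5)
Your proof is correct and follows essentially the same route the paper takes: extract a subsequence $\{n_k\}$ from the definition of $\overline{\dim}_M^s(\mu)$, run the union-bound/invariance/Borel--Cantelli argument of \Cref{Prop lower bound 2} along that subsequence, and conclude via the fact that a full-sequence $\limsup$ dominates any subsequential $\limsup$. One small remark: the ``thinning'' step to force $n_k\ge 2^k$ is unnecessary, since $\{n_k\}$ is already an increasing sequence of distinct positive integers and therefore $\sum_k 1/n_k^2\le\sum_n 1/n^2<\infty$ automatically; this is exactly why the paper does not bother with it.
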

\begin{proof}
    Let $\eps>0$, then by definition of limsup there exists a subsequence $\{n_k\}_k\to\infty$ such that for all $k$,
    \[\frac{\log \log\left(-M_\mu(1/n_k)\right)}{\log n_k}\ge\overline{\alpha}-\eps.\]
    Then repeating the proof of \Cref{Prop lower bound 2} along $\{n_k\}_k$, one gets that for $\mu$-almost every $x$: 
    \[\limsup_{k\to\infty}\frac{\log\log\tau_{1/n_k}(x)}{\log n_k}\ge \overline{\alpha}-\eps,\]
    then sending $\eps\to0$,
    \[\limsup_{r\to0}\frac{\log\log\tau_r(x)}{-\log r}\ge\limsup_{k\to\infty}\frac{\log\log\tau_{1/n_k}(x)}{\log n_k}\ge\overline{\alpha}.\qedhere\]
\end{proof}

\section{Irrational rotations}\label{sec: rotation}
    The proof of \eqref{eqn: main theorem with mixing} requires an exponentially $\psi$-mixing rate which is a strong mixing condition, and it is natural to ask if the same asymptotic growth in \Cref{thm: main theorem} remains the same under different mixing conditions, e.g. exponentially $\phi$-mixing and $\alpha$-mixing, or even polynomial $\psi$-mixing. Although these questions are unresolved, in this section we will show that the limsup and liminf of the asymptotic growth rate can differ if the system is not mixing at all.  
    
    Let $\theta\in(0,1)$ be an irrational number and $T(x)=x+\theta\text{ (mod 1)}$, and $\mu$ the one-dimensional Lebesgue measure on $[0,1)$. Then $(T,\mu)$ is an ergodic probability preserving system with $\dim_M(\mu)=1$. 
    \begin{definition}\label{def: rotation type}
        For a given irrational number $\theta$, the \emph{type of} $T_\theta$ is given by the following number
        \[\eta=\eta(\theta):=\sup\left\{\beta:\liminf_{n\to\infty}n^{\beta}\|n\theta\|=0\right\},\]where for every $r\in\R$, $\|r\|=\min_{n\in\bbm Z}|r-n|$. 
    \end{definition}
\begin{remark} (See \cite{Khi64})
    For every $\theta\in(0,1)$ irrational, $\eta(\theta)\ge1$ and $\eta(\theta)=1$ almost everywhere, but there exists irrational number with $\eta(\theta)\in(1,\infty]$, e.g. the Liouville numbers.
\end{remark}
   
        For any irrational number $\theta\in(0,1)$ there is a unique continued fraction expansion
        \[\theta=[a_1,a_2,\dots]:=\frac{1}{a_1+\frac{1}{a_2+...}}\]
        where $a_i\ge 1$ for all $i\ge1$. Set $p_0=0$ and $q_0=1$, and for $i\ge 1$ choose $p_i,q_i\in\N$ coprime such that 
        \[\frac{p_i}{q_i}=[a_1,\dots,a_i]=\frac1{a_1+\frac1{\dots\frac1{a_i}}}.\]
     \begin{definition}   
       The $a_i$ terms are called the \emph{$i$-th partial quotient} and $p_i/q_i$ the $i$-th \emph{convergent}. In particular, (see \cite{Khi64})\[\eta(\theta)=\limsup_{n\to\infty}\frac{\log q_{n+1}}{\log q_n}.\]
    \end{definition}

\begin{theorem}
For any irrational rotation $T_\theta$ with $\eta(\theta)>1$, 
\[\liminf_{r\to0}\frac{\log \tau_r(x)}{-\log r}=\dim_M(\mu)=1<\eta=\limsup_{r\to0}\frac{\log \tau_r(x)}{-\log r} \text{ $\mu$-a.e.}\]
\end{theorem}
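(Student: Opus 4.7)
Since $\mu$ is Lebesgue, $M_\mu(r) = 2r$ for small $r$, giving $\dim_M(\mu) = 1$. Because $T_\theta$ is a rigid rotation, the gap structure of the orbit $\{T_\theta^j(x)\}_{0 \le j < N}$ is independent of $x$; in particular $\tau_r(x)$ does not depend on $x$, so the $\mu$-a.e.\ assertion reduces to a deterministic statement about the single quantity $\tau_r = \min\{N : L_N \le 2r\}$, where $L_N$ is the length of the longest arc of the partition of $[0,1)$ cut out by the first $N$ orbit points. The analysis is driven by the three-distance (Steinhaus) theorem applied to the convergents $p_k/q_k$ of $\theta$: in particular $L_{q_k} = \|q_{k-2}\theta\|$, together with the refined monotonicity statement that $L_N > \|q_{k-1}\theta\|$ for every $N < q_{k+1}$ (no matter how many of the first $q_{k+1}$ orbit points have been placed, the max gap can only reach its final ``after-$q_{k+1}$'' value $\|q_{k-1}\theta\|$ once all are placed).

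The volume bound $\tau_r \ge \lceil 1/(2r) \rceil$ yields at once $\liminf, \limsup \log \tau_r/(-\log r) \ge 1$. For the matching bound $\liminf \le 1$, I pick a subsequence $k_j$ along which $\log q_{k_j+1}/\log q_{k_j} \to 1$; setting $r_j = \|q_{k_j-1}\theta\|/2$, the identity $L_{q_{k_j+1}} = \|q_{k_j-1}\theta\| \le 2 r_j$ gives $\tau_{r_j} \le q_{k_j+1}$, while $-\log r_j = \log q_{k_j} + O(1)$, so the ratio is bounded above by $\log q_{k_j+1}/\log q_{k_j} + o(1) \to 1$.

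For $\limsup = \eta$, the lower direction is the core resonance argument. Choose a subsequence $k_j$ realising $\log q_{k_j+1}/\log q_{k_j} \to \eta$ (available from $\eta = \limsup_k \log q_{k+1}/\log q_k$), and set $r_j$ slightly below $\|q_{k_j-1}\theta\|/2$; the monotonicity statement gives $L_N > \|q_{k_j-1}\theta\| > 2 r_j$ for all $N < q_{k_j+1}$, forcing $\tau_{r_j} \ge q_{k_j+1}$ and hence $\log \tau_{r_j}/(-\log r_j) \ge \log q_{k_j+1}/\log q_{k_j} + o(1) \to \eta$. For the upper direction, for any small $r$ locate $k$ with $r \in [\|q_{k-1}\theta\|/2, \|q_{k-2}\theta\|/2)$; then $\tau_r \in [q_k, q_{k+1}]$, and an elementary monotonicity argument on the step-function $r \mapsto \log \tau_r/(-\log r)$ restricted to this window shows that its supremum is attained at an endpoint, yielding $\log \tau_r/(-\log r) \le \max(\log q_k/\log q_{k-1}, \log q_{k+1}/\log q_k) \le \eta + o(1)$ as $k \to \infty$.

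The principal technical obstacle is establishing the sharp three-distance statement $L_N > \|q_{k-1}\theta\|$ for $N < q_{k+1}$ (the engine of the resonance bound), together with the precise identification of $L_N$ and $\tau_r$ within each $k$-window used in the monotonicity argument; both are standard consequences of the three-gap theorem but require careful bookkeeping in the continued fraction recurrence relating $q_k$, $q_{k-1}$, and $a_{k+1}$.
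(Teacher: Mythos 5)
Your approach is genuinely different from the paper's. The paper reduces the cover time to hitting times of dyadic intervals and cites Kim and Seo's Propositions~6 and~10 for the key quantitative inputs, whereas you work directly with the largest gap $L_N$ of the $N$-point orbit via the three-distance theorem, making the proof self-contained. Your observation that $\tau_r(x)$ is \emph{literally} independent of $x$ is sharper than what the paper records (the paper proves the same identity $\tau_r(x)=\tau_r(y)$ inside its ``claim'' but then still runs a Borel--Cantelli argument for both liminf directions), and it makes the volume bound $\tau_r\ge 1/(2r)$ dispose of the liminf lower bound instantly. Both proofs ultimately encode the same continued-fraction information: by translation invariance, $\mu\bigl(W_{[0,r)}>N\bigr)=0$ if and only if $L_N\le r$, so the hitting-time statements the paper cites are equivalent to gap statements.

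However, several of the specific three-gap facts you assert are wrong. The identity $L_{q_k}=\|q_{k-2}\theta\|$ fails in general: the correct value is $L_{q_k}=\|q_{k-1}\theta\|+\|q_k\theta\|$, which equals $\|q_{k-2}\theta\|$ only when $a_k=1$ (e.g.\ for $\theta=\sqrt2-1$, $L_{q_2}=L_5\approx 0.243=\|q_1\theta\|+\|q_2\theta\|$, whereas $\|q_0\theta\|\approx0.414$). Similarly $L_{q_{k+1}}=\|q_{k-1}\theta\|$ is only true when $a_{k+1}=1$; in general $L_{q_{k+1}}=\|q_k\theta\|+\|q_{k+1}\theta\|\le\|q_{k-1}\theta\|$, which is the direction you need, so this particular slip is harmless. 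Your ``refined monotonicity statement'' $L_N>\|q_{k-1}\theta\|$ for every $N<q_{k+1}$ is also false, and so is the parenthetical intuition that the max gap only reaches $\|q_{k-1}\theta\|$ ``once all $q_{k+1}$ points are placed'': in fact $L_N=\|q_{k-1}\theta\|$ exactly for $q_k+q_{k-1}\le N<q_{k+1}$. The correct and needed statement is the non-strict $L_N\ge\|q_{k-1}\theta\|$ for $N<q_{k+1}$, and it deserves a proof rather than an assertion.

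The real gap is in the upper bound for the limsup. With the window $r\in[\|q_{k-1}\theta\|/2,\|q_{k-2}\theta\|/2)$ and only the coarse bound $\tau_r\le q_{k+1}$, the ratio $\log\tau_r/(-\log r)$ is controlled only by $\log q_{k+1}/\log q_{k-1}$, which can be of order $\eta^2$ (nothing in $\eta=\limsup_k\log q_{k+1}/\log q_k$ forbids two consecutive large partial quotients); it is \emph{not} bounded by $\max(\log q_k/\log q_{k-1},\log q_{k+1}/\log q_k)$, and the ``elementary monotonicity argument'' you gesture at would have to justify exactly this. What closes the argument is the sharper fact that $\tau_r\le q_k+q_{k-1}$ already once $2r>\|q_{k-1}\theta\|$ --- i.e.\ the maximal gap drops to $\|q_{k-1}\theta\|$ at $N=q_k+q_{k-1}$, so that on your window $\tau_r=O(q_k)$, not $O(q_{k+1})$, and one application of $q_{k+1}\lesssim q_k^{\eta+\eps}$ suffices. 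This is precisely the content of Kim--Seo's Proposition~6 which the paper invokes; if you want the argument self-contained you need to extract this from the three-gap theorem explicitly rather than via an unproved monotonicity heuristic.
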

The proof of this theorem relies on algebraic properties of the number $\eta(\theta)$, and for simplicity write $\eta$ from now on.
    \begin{lemma}\cite[Fact 1, Lemma 7]{KimSeo03}\label{lemma: convergent-denominator}
        \begin{enumerate}[(a)]
        \item $q_{i+2}=a_{i+2}q_{i+1}+q_i$ and $p_{i+2}=a_{i+2}p_{i+1}+p_i$.
            \item $1/{\left(2q_{i+1}\right)}\le1/\left({q_{i+1}+q_i}\right)<\|q_i\theta\|<1/q_{i+1}$ for $i\ge1$.
            \item If $0<j<q_{i+1}$, then $\|j\theta\|\ge \|q_i\theta\|$.
         \item for $\eps>0$, there exists uniform $C_{\eps}>0$ such that for all $j\in\N$, $j^{\eta+\eps}\|j\theta\|>C_\eps$.
        \end{enumerate}
    \end{lemma}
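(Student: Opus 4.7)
The lemma is a compendium of classical facts from the theory of continued fractions, and my plan is to prove each item in turn, using the fundamental three-term identity $p_{i+1}q_i - p_iq_{i+1} = (-1)^i$ throughout. For (a), I would induct on $i$ via the key rewriting $[a_1,\ldots,a_{i+2}] = [a_1,\ldots,a_i,a_{i+1} + 1/a_{i+2}]$; applying the inductive hypothesis to this continued fraction of length $i+1$ whose last partial quotient is $a_{i+1}+1/a_{i+2}$ and then clearing $a_{i+2}$ from numerator and denominator produces the stated recurrences for $p_{i+2}$ and $q_{i+2}$. Coprimality of the new numerator and denominator is automatic from the three-term identity (computed for index $i+1$ in terms of $p_i,q_i,p_{i+1},q_{i+1}$), which is itself an easy parallel induction using the same recurrence.

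For (b), I would set $\alpha_{i+2} := [a_{i+2},a_{i+3},\ldots] > 1$, so that $\theta = (p_{i+1}\alpha_{i+2} + p_i)/(q_{i+1}\alpha_{i+2} + q_i)$. Combined with the three-term identity this yields
$$\theta q_i - p_i = \frac{(-1)^i \alpha_{i+2}}{q_{i+1}\alpha_{i+2} + q_i} = \frac{(-1)^i}{q_{i+1} + q_i/\alpha_{i+2}}.$$
Since $1<\alpha_{i+2}$, the denominator on the right lies in the open interval $(q_{i+1},\,q_{i+1}+q_i)$, producing $1/(q_{i+1}+q_i) < |\theta q_i - p_i| < 1/q_{i+1}$. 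The bound $1/q_{i+1}\le 1/2$ for $i\ge 1$ makes $p_i$ the nearest integer to $q_i\theta$, so $\|q_i\theta\| = |\theta q_i - p_i|$; and $1/(2q_{i+1}) \le 1/(q_{i+1}+q_i)$ follows from $q_i\le q_{i+1}$, itself an easy consequence of the recurrence in (a).

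For (c), let $k\in\Z$ be the nearest integer to $j\theta$ so that $\|j\theta\| = |j\theta - k|$. The matrix with rows $(q_i,q_{i+1})$ and $(p_i,p_{i+1})$ has determinant $\pm 1$, so $(j,k) = x(q_i,p_i) + y(q_{i+1},p_{i+1})$ for unique integers $x,y$, not both zero. If $y=0$ then $|j\theta-k| = |x||q_i\theta-p_i|$ with $|x|\ge 1$ and we are done. If $y\ne 0$ then $x$ and $y$ must have opposite signs (otherwise $|j|\ge q_{i+1}$, violating $j<q_{i+1}$); combined with the fact that $q_i\theta-p_i$ and $q_{i+1}\theta-p_{i+1}$ have opposite signs because consecutive convergents lie on opposite sides of $\theta$, the two contributions to $j\theta-k=x(q_i\theta-p_i)+y(q_{i+1}\theta-p_{i+1})$ share the same sign and so cannot cancel, giving $|j\theta-k| \ge |x||q_i\theta - p_i| \ge \|q_i\theta\|$.

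For (d), since $\eta+\eps$ strictly exceeds $\eta = \sup\{\beta : \liminf_n n^\beta\|n\theta\|=0\}$, we have $\liminf_{j\to\infty} j^{\eta+\eps}\|j\theta\| > 0$, giving a positive lower bound for all sufficiently large $j$; for the remaining finitely many indices, $\|j\theta\|>0$ by irrationality of $\theta$, and taking a minimum produces the uniform $C_\eps$. The main obstacle across the proof is essentially careful bookkeeping: keeping the sign conventions straight in (b) (controlling both directions of the inequality via $\alpha_{i+2}$, and justifying that $p_i$ is truly the closest integer to $q_i\theta$) and verifying the no-cancellation step in (c), where ruling out same-sign $(x,y)$ and establishing the alternating signs of the $q_j\theta-p_j$ must be made precise.
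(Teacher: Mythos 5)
Your proof is correct: all four items are established by the standard classical arguments (the convergent recurrences by induction with a real last quotient, the complete-quotient formula for $q_i\theta-p_i$ giving the two-sided bound, the unimodular change of basis plus the sign-alternation of $q_i\theta-p_i$ for the best-approximation property, and the definition of the type $\eta$ together with irrationality for the uniform constant $C_\eps$). The paper offers no proof of this lemma—it is quoted from \cite{KimSeo03} (ultimately Khinchin)—so there is nothing to compare beyond noting that yours is the textbook argument; the only slip is notational: your $\alpha_{i+2}=[a_{i+2},a_{i+3},\dots]$ must be read as the complete quotient $a_{i+2}+1/(a_{i+3}+\cdots)>1$ rather than in the paper's bracket convention (which would make it $<1$), and your computation indeed uses it in the former sense.
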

The following propositions are largely based on \cite[Proposition 6, Proposition 10]{KimSeo03}.
\begin{proposition}For $\mu$-a.e. $x$,
    \begin{equation}
        \limsup_{r\to0}\frac{\log \tau_r(x)}{-\log r}\ge \eta.
    \end{equation}
\end{proposition}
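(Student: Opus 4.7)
The plan is to exploit the fact that for an irrational rotation $T_\theta$, the cover time $\tau_r(x)$ is independent of $x$: the orbit $\{x+j\theta\}_{j=0}^{N-1}$ mod $1$ is a rigid translate of $\{j\theta\}_{j=0}^{N-1}$, and the gap structure (hence the scale at which the orbit becomes $r$-dense) is translation-invariant. So the almost-sure statement reduces to showing that a single sequence $r_n\to 0$ realises the ratio $\eta$, and the $x$-dependence is vacuous.

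Define the max-gap function $g(N)$ as the largest of the lengths of the intervals into which $\{0,\theta,\dots,(N-1)\theta\}$ mod $1$ divides $[0,1)$. Then $\tau_r=\min\{N:g(N)\le 2r\}$, and $g$ is non-increasing in $N$. The heart of the argument is the following max-gap lower bound proved via the three-distance theorem (Sós--Surányi--Świerczkowski):
\[
g(N)\ge \|q_{k-1}\theta\|\qquad\text{for every }N<q_{k+1}.
\]
To see this, note that by three-distance, at $N=q_{k+1}$ the orbit sits in its ``stage $k+1$'' configuration whose largest gap is at least $\|q_{k-1}\theta\|$ (e.g.\ when the gaps take only two distinct values these are $\|q_k\theta\|$ and $\|q_{k-1}\theta\|$; when three, the largest is still $\ge\|q_{k-1}\theta\|$, using the additive relation $L_1=L_2+L_3$). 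Since the insertion at step $q_{k+1}$ subdivides a gap that was itself at least $\|q_{k-1}\theta\|$, monotonicity of $g$ gives $g(q_{k+1}-1)\ge\|q_{k-1}\theta\|$ and hence the claim for all smaller $N$ as well. This technical step is where I expect the main obstacle: tracking precisely how the three gap sizes cascade under the insertions between $q_k$ and $q_{k+1}$.

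With the key inequality in hand, by definition of $\eta$ pick a subsequence $(k_n)$ with $\log q_{k_n+1}/\log q_{k_n}\to\eta$, and set $r_n:=\|q_{k_n-1}\theta\|/3$. Lemma~4.2(b) gives $r_n\in(1/(6q_{k_n}),\,1/(3q_{k_n}))$, hence $-\log r_n=\log q_{k_n}+O(1)$. Since $2r_n<\|q_{k_n-1}\theta\|\le g(N)$ for every $N<q_{k_n+1}$, the cover time satisfies $\tau_{r_n}\ge q_{k_n+1}$. Consequently
\[
\frac{\log\tau_{r_n}}{-\log r_n}\ge \frac{\log q_{k_n+1}}{\log q_{k_n}+O(1)}\longrightarrow \eta,
\]
and taking $\limsup$ over $r\to 0$ (which dominates the $\limsup$ along any subsequence) gives $\limsup_{r\to 0}\log\tau_r/(-\log r)\ge\eta$ for every $x$, as required.
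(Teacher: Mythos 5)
Your approach is genuinely different from the paper's, and arguably more self-contained. The paper also first observes that $\tau_r(x)$ is $x$-independent (its claim proves $\tau_r(x)=\tau_r(y)$ for all $x,y$, then invokes ergodicity — which is actually redundant given constancy), but then delegates the hard quantitative work to \cite[Proposition 10]{KimSeo03}, a ready-made result bounding $\limsup_{r\to0}\log W_{B(y,r)}(x)/(-\log r)\ge\eta$ for a.e.\ $x,y$, combined with the trivial bound $\tau_r(x)\ge W_{B(y,r)}(x)$. You instead try to re-derive the bound from scratch via the three-distance theorem and the max-gap function $g(N)$. This buys self-containedness and even a pointwise (every-$x$) statement directly, at the cost of having to control $g(N)$ yourself, and you correctly flag that step as the delicate one.

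There is a concrete error in that step. You assert that at $N=q_{k+1}$ the configuration has two gap lengths $\|q_k\theta\|$ and $\|q_{k-1}\theta\|$, hence $g(q_{k+1})\ge\|q_{k-1}\theta\|$. In fact at $N=q_{k+1}$ the two gap lengths are $\|q_k\theta\|$ and $\|q_k\theta\|+\|q_{k+1}\theta\|$, and since $\|q_{k-1}\theta\|=a_{k+1}\|q_k\theta\|+\|q_{k+1}\theta\|$, one has $g(q_{k+1})=\|q_k\theta\|+\|q_{k+1}\theta\|\le\|q_{k-1}\theta\|$, with equality only when $a_{k+1}=1$; so the inequality you need goes the wrong way. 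For example with $\theta=\sqrt2-1$ (so $a_{k}\equiv2$, $q_2=5$, $q_3=12$) one computes $g(q_3)=g(12)\approx0.101<\|q_1\theta\|\approx0.172$. Consequently you cannot deduce ``the gap split at step $q_{k+1}$ had size $\ge\|q_{k-1}\theta\|$'' from a bound on $g(q_{k+1})$. What is actually true — and is what your argument needs — is the statement at $N=q_{k+1}-1$, namely $g(q_{k+1}-1)\ge\|q_{k-1}\theta\|$ (indeed equality holds once $N\ge q_k+q_{k-1}$, when the gaps are exactly $\|q_k\theta\|$ and $\|q_{k-1}\theta\|$, and for smaller $N$ monotonicity of $g$ takes over). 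This needs a direct justification via the gap structure for $q_k\le N<q_{k+1}$, not a deduction from the $N=q_{k+1}$ configuration. With that sub-lemma repaired, the rest of your computation (choosing $r_n=\|q_{k_n-1}\theta\|/3$ along a subsequence realising $\eta$, and using $-\log r_n=\log q_{k_n}+O(1)$) goes through and gives the desired $\limsup\ge\eta$ for every $x$.
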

\begin{proof}First we prove the following simple claim.
\begin{claim}
        The function $\varphi(x)=\limsup_{r\to0}\frac{\log \tau_r(x)}{-\log r}$ is constant $\mu$ a.e.
    \end{claim} 
    \begin{proof}[Proof of Claim.]
        Suppose $\tau_r(x)=k$, and for any $y\neq x$, if there exists $z$ such that for all $0\le j\le k,$ $\left|T^jy-z\right|\ge r$, then for all $0\le j\le k$
        \[\left|T^jy-x+x-z\right|=\left|T^jx-(x+z-y)\right|\ge r,\]
        contradicting $\tau_r(x)=k$, then by symmetry $\tau_r(x)=\tau_r(y)$, in particular $\tau_r(x)=\tau_r(Tx)$ so $\varphi\circ T=\varphi$, and $\mu$ is (uniquely) ergodic implies $\varphi$ is constant almost everywhere. \qedhere
    \end{proof}
    
    By \cite[Proposition 10]{KimSeo03}, for almost every $x,y$
    \[\limsup_{r\to0}\frac{\log W_{B(y,r)}(x)}{-\log r}\ge \eta,\]
    where $W_E(x):=\inf\{n\ge 1:T^nx\in E\}$ denotes the waiting time of $x$ before visiting $E$.
    Hence there exists a set of strictly positive measure consisting of points that satisfy
    \[\limsup_{r\to0}\frac{\log \tau_r(x)}{-\log r}\ge \limsup_{r\to0}\frac{\log W_{B(y,r)}(x)}{-\log r}\ge \eta,\]
    since for all $y\in[0,1)$, $\tau_r(x)\ge W_{B(y,r)}(x)$. By the claim above this holds for almost every $x$.
\end{proof}
\begin{proposition}\label{prop: upper bound for limsup rotation}
    For $\mu$-a.e. $x$,
    \[\limsup_{r\to0}\frac{\log \tau_r(x)}{-\log r}\le \eta.\]
\end{proposition}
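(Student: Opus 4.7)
The plan is as follows. Since the Claim in the preceding proposition establishes that $\tau_r$ is constant in the base point, it suffices to bound $\tau_r(0)$. By definition, $\tau_r(0)$ equals the least $N\in\N$ for which the partial orbit $\{k\theta\bmod 1:0\le k<N\}$ induces a partition of $[0,1)$ into arcs each of length at most $r$. I will exhibit an explicit $N=N(r)$ of size $O(r^{-(\eta+\eps)})$ that achieves this, by combining the growth rate of the continued-fraction convergents of $\theta$ with the three-distance theorem of S\'os.

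First I will exploit the identity $\eta=\limsup_{k\to\infty}\log q_{k+1}/\log q_k$ (recorded in \Cref{def: rotation type}) to obtain, for any $\eps>0$, an index $K$ such that $q_{k+1}\le q_k^{\eta+\eps}$ for all $k\ge K$. Next, given small $r>0$, I will let $k=k(r)$ be the smallest index with $\|q_{k-1}\theta\|\le r$. Minimality forces $r<\|q_{k-2}\theta\|$, and \Cref{lemma: convergent-denominator}(b) then gives $q_{k-1}<1/r$; since $k\to\infty$ as $r\to 0$, it follows that eventually $q_k\le q_{k-1}^{\eta+\eps}<r^{-(\eta+\eps)}$. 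The three-distance theorem will then enter: for any $N$ with $q_k\le N<q_{k+1}$, the arcs cut out by $\{j\theta\bmod 1:0\le j<N\}$ take at most three distinct lengths, each bounded above by $\|q_{k-1}\theta\|$. Taking $N=q_k$ and recalling $\|q_{k-1}\theta\|\le r$, this shows that the first $q_k$ iterates already cover $[0,1)$ at scale $r$, so $\tau_r(0)\le q_k<r^{-(\eta+\eps)}$. Dividing $\log\tau_r(0)$ by $-\log r$ yields $\log\tau_r(0)/(-\log r)\le\eta+\eps+o(1)$; taking $\limsup_{r\to 0}$ and then $\eps\to 0$ completes the argument.

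The main technical care will be in invoking the three-distance theorem in the precise form needed, namely that $N=q_k$ iterates suffice to produce maximum gap at most $\|q_{k-1}\theta\|$. Should the cleanest statement instead require $N$ to be taken up to $q_k+q_{k-1}$, or even $q_{k+1}$, the argument still goes through since each of these is bounded by a constant multiple or by one further application of Step~1's inequality, both of which are absorbed into $r^{-(\eta+\eps)}$. An alternative that sidesteps the three-distance theorem altogether is a discrepancy estimate for Kronecker sequences: for $\theta$ of type $\eta$, $D_N(\{j\theta\})=O(N^{-1/(\eta+\eps)})$, and since $D_N<r$ forces every sub-interval of length $r$ to be visited by time $N$, the same bound $\tau_r(0)=O(r^{-(\eta+\eps)})$ emerges. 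Either route reduces the proposition to the type-$\eta$ control of $\|q_k\theta\|$ packaged in \Cref{lemma: convergent-denominator}.
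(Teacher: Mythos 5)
Your argument is correct, and it reaches the conclusion by a genuinely different route than the paper. You exploit the fact — already established in the Claim preceding this proposition — that for a rotation the cover time $\tau_r$ is literally the same at every base point, which reduces the problem to the single deterministic quantity $\tau_r(0)$. You then bound $\tau_r(0)$ directly via the three-distance theorem: with $N\approx q_k$ (or $q_k+q_{k-1}$) orbit points the maximal gap is controlled by $\|q_{k-1}\theta\|$, and choosing $k$ minimal with $\|q_{k-1}\theta\|\le r$ combined with $q_{k-1}<1/r$ (from \Cref{lemma: convergent-denominator}(b)) and the type bound $q_k\le q_{k-1}^{\eta+\eps}$ yields $\tau_r(0)=O(r^{-(\eta+\eps)})$. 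The paper instead invokes Kim--Seo's almost-sure waiting-time bound $W_{[0,2^{-n})}\le q_{j_n}+q_{j_n-1}$ a.e., transfers it to every dyadic interval by translation invariance of Lebesgue, takes a union bound over the partition $\Q_n$, and then applies \Cref{lemma: convergent-denominator}(b) and (d) — in particular the constant $C_\eps$ from part (d) rather than the $\limsup$ characterisation of $\eta$ that you use — to extract the exponent. Your route is more elementary in that it is fully deterministic (no union bound, no appeal to a.e. statements about waiting times) and makes the role of the three-distance theorem explicit; the paper's route stays closer to the waiting-time machinery used elsewhere in the article and only relies on the external Kim--Seo estimate. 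One small imprecision worth tightening: $\tau_r(0)\le N$ follows from the weaker condition that the maximal gap of $\{j\theta:0\le j\le N\}$ is strictly less than $2r$ (a point at the midpoint of a gap of length $g$ is within $g/2$ of the orbit), so your characterisation of $\tau_r(0)$ as \emph{equal} to the least $N$ giving gaps $\le r$ is only an upper bound — which is of course all you need, and you already note the possible factor of $2$ from using $q_k+q_{k-1}$ in place of $q_k$ is absorbed when taking $\log/\log$.
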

\begin{proof}
Let $\Q_n:=\left\{\left[2^{-n}j,2^{-n}(j+1)\right):j=0,\dots,2^n-1\right\}$ and $\tau\left({\Q_n},x\right)$ the minimum time for $x$ to have visited each element of $\Q_n$, again we have $\tau_{2^{-n+1}}(x)\le \tau(\Q_n,x)$ for all $x$. By \Cref{lemma: convergent-denominator} (a) and (c), $\{\|q_i\theta\|\}_i$ is a decreasing sequence, and there for each $n\in\N$ exists a minimal $j$ such that $\|q_{j}\theta\|< 2^{-n}\le \|q_{j-1}\theta\|$, write $j=j_n$.

By \cite[Proposition 6]{KimSeo03} for all $n$, there is $\mu\left(W_{[0,2^{-n})}>q_{j_n}+q_{j_n-1}\right)=0$. Notice that for all $a,b\in[0,1)$,
    \begin{equation}\label{eqn: Leb translation invariant}
        \mu\{W_{[a,a+b)}(x)=k\}=\mu\left\{\{x:W_{[0,b)}(x)=k\}+a\right\}=\mu\left\{W_{[0,b)}(x)=k\right\},
    \end{equation}
    as $\mu=Leb$ is translation invariant. Then by \eqref{eqn: Leb translation invariant}
    \begin{equation*}\label{eqn: 0 measure tau_Q}
    \begin{split}
        &\mu\left\{\tau\left({\Q_n},x\right)>q_{j_n}+q_{j_n-1}\right\}=\mu\left\{x:\,\forall Q\in\Q_{n}:\,W_Q(x)>q_{j_n}+q_{j_n-1}\right\}\\
        &=\mu\left(x:\bigcup_{Q\in\Q_{n}}\left\{W_Q(x)>q_{j_n-1}+q_{j_n}\right\}\right)\le \sum_{Q\in\Q_{n}}\mu\left(W_Q>q_{j_n-1}+q_{j_n}\right)\\
        &=\sum_{j=0}^{2^n-1}\mu\left(W_{\left[2^{-n}j,2^{-n}(j+1)\right)}>q_{j_n}+q_{j_n-1}\right)=\sum_{j=0}^{2^n-1}\mu\left(W_{[0,2^{-n})}>q_{j_n}+q_{j_n-1}\right)=0.
    \end{split}
    \end{equation*}
    Hence by Borel-Cantelli, for all $n$ large enough, $\tau_{2^{-n+1}}(x)\le (q_{j_n}+q_{j_n-1})$ for $\mu$-a.e $x\in [0,1)$. 
    
    Let $\eps>0$, and by \Cref{lemma: convergent-denominator} there exists $C_\eps$ such that 
    \begin{equation*}
        \log\left(q_{j_n}+q_{j_n-1}\right)\le \log\left( 2q_{j_n}\right)\le\log\frac2{\|q_{j_n}\theta\|}\le (\eta+\eps)\log q_{j_n}+\log2-\log C_\eps.
    \end{equation*}
    Again by \Cref{lemma: convergent-denominator} and our choice of $j_n$, for all $n$ large enough, up to a uniform constant 
    \begin{equation*}
        \log\tau_{2^{-n+1}}(x)\le \log(q_{j_n}+q_{j_n-1})<(\eta+\eps)\log q_{j_n}\le-(\eta+\eps)\log\|q_{j_n-1}\theta\|\le (\eta+\eps)n\log 2.
    \end{equation*}
    Hence $\limsup_{n\to\infty}\frac{\log\tau_{2^{-n}}(x)}{n\log2}\le \eta+\eps$ for $\mu$-almost every $x$, and send $\eps$ to 0 the proposition is proved since for each $r<0$ there is a unique $n\in\N$ for which $2^{-n}<r\le 2^{-n+1}$.
\end{proof}

\begin{proposition}
    For $\mu$-almost every $x\in[0,1)$, 
    \[\liminf_{r\to0}\frac{\log \tau_r(x)}{-\log r}= 1.\]
\end{proposition}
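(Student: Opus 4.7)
The plan is to prove $\liminf\ge 1$ and $\liminf\le 1$ separately, the first by a trivial covering estimate and the second by extracting a subsequence from the bound already obtained in the previous proposition.

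For the lower bound, the open balls $\{B(T^j(x),r):0\le j\le \tau_r(x)\}$ form a cover of $[0,1)$ and therefore number at least $\lceil 1/(2r)\rceil$, which gives $\tau_r(x)\ge 1/(2r)-1$ for every $x$. Taking logarithms and sending $r\to 0$ yields $\liminf_{r\to 0}\log\tau_r(x)/(-\log r)\ge 1=\dim_M(\mu)$.

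For the matching upper bound I would recycle the estimate $\tau_{2^{-n+1}}(x)\le q_{j_n}+q_{j_n-1}$ established in the proof of \Cref{prop: upper bound for limsup rotation} (which holds on a set of full $\mu$-measure for every sufficiently large $n$, with $j_n$ determined by $\|q_{j_n}\theta\|<2^{-n}\le \|q_{j_n-1}\theta\|$) but evaluate it only along the subsequence
\[n_j:=\bigl\lceil-\log_2\|q_{j-1}\theta\|\bigr\rceil,\]
i.e.\ the smallest integer $n$ for which $j_n=j$. By \Cref{lemma: convergent-denominator}(b) one has $\|q_{j-1}\theta\|<1/q_j$, so $n_j>\log_2 q_j$, and $n_j\to\infty$ because $\|q_{j-1}\theta\|\to 0$; combining these with the estimate gives $\tau_{2^{-n_j+1}}(x)\le 2q_j<2^{n_j+1}$ and hence
\[\frac{\log \tau_{2^{-n_j+1}}(x)}{(n_j-1)\log 2}\le \frac{n_j+1}{n_j-1}\xrightarrow[j\to\infty]{}1.\]
Since $\liminf_{r\to 0}$ is bounded above by the limit along any subsequence of radii tending to $0$ (the standard dyadic squeeze identifying $\liminf$ over $r$ with $\liminf$ over $2^{-n+1}$, already used repeatedly in \Cref{sec: rotation}), this yields $\liminf_{r\to 0}\log\tau_r(x)/(-\log r)\le 1$, completing the argument.

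The only technical nuance is to verify that the subsequence $n_j$ actually satisfies $j_{n_j}=j$, which amounts to the interval $[-\log_2\|q_{j-1}\theta\|,-\log_2\|q_j\theta\|)$ containing an integer; this requires the partial quotient $a_{j+1}\ge 2$ (so that $\|q_{j-1}\theta\|/\|q_j\theta\|$ exceeds $2$), and the hypothesis $\eta(\theta)>1$ guarantees this for infinitely many $j$, which is all the subsequence argument needs. No further analytic input beyond what is already in \Cref{prop: upper bound for limsup rotation} and \Cref{lemma: convergent-denominator} is required.
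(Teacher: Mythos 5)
Your proof is correct, and the lower bound takes a genuinely different (and cleaner) route than the paper. The paper proves $\liminf \ge 1$ probabilistically: it bounds $\mu(\tau_{r_n}(x) < 2^{n(1-\eps)})$ by the measure of a hitting-time event, sums over $n$, and invokes Borel--Cantelli, giving the bound only $\mu$-a.e. You instead observe that the balls $B(T^jx,r)$, $0\le j\le\tau_r(x)$, cover $[0,1)$, hence $(\tau_r(x)+1)\cdot 2r\ge 1$, which gives $\tau_r(x)\ge 1/(2r)-1$ pointwise for \emph{every} $x$ and hence $\liminf\ge 1$ deterministically. This is strictly stronger and requires no measure theory at all; the paper's argument is more robust in the sense that it would transfer to non-Lebesgue invariant measures where $M_\mu(r)$ is not simply comparable to $r$, but in the present setting your argument is preferable. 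Your upper bound $\liminf\le 1$ uses the same idea as the paper (evaluate the limsup-proposition's estimate $\tau(\Q_n,x)\le q_{j_n}+q_{j_n-1}$ along a subsequence of scales adapted to the convergents), but you explicitly verify the nontrivial step that the paper glosses over: that the required subsequence of $n$'s actually exists, i.e.\ that $j_{n_j}=j$ for infinitely many $j$. The paper simply asserts ``choose $n_i$ the smallest such that $\|q_{i+1}\theta\|<2^{-n_i}\le\|q_i\theta\|$'' without checking that such an $n_i$ exists; your observation that the interval $[-\log_2\|q_{j-1}\theta\|,-\log_2\|q_j\theta\|)$ contains an integer whenever $a_{j+1}\ge 2$ (hence has length $>1$, since $\|q_{j-1}\theta\|=a_{j+1}\|q_j\theta\|+\|q_{j+1}\theta\|$), and that $\eta(\theta)>1$ forces $a_j\ge 2$ for infinitely many $j$, closes a small gap in the paper's exposition. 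Your write-up is therefore both correct and, in this detail, more careful.
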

\begin{proof}
Let $\eps>0$, and using the same arguments in the last proof, \ie cover time is greater than the hitting time of the ball of smallest measure at scale $r$, then along the sequence $r_n=2^{-(n+1)}$, one gets for all $[a-r_n,a+r_n)\subset[0,1)$,  there is 
\begin{align*}
&\sum_{n\ge1}\mu\left(\tau_{r_n}(x)< 2^{n(1-\eps)}\right)\le \sum_{n\ge1}\mu\left(W_{[a-2^{-n-1},a+2^{-n-1})}(x)< 2^{n(1-\eps)}\right)\\
&\le \sum_{n\ge1}\sum_{k=0}^{2^{n(1-\eps)}}\mu\left(T^{-k}[a-2^{-n-1},a+2^{-n-1})\right)=\sum_{n\ge1}2^{n(1-\eps)}2^{-n}=\sum_{n\ge1}2^{-\eps n}<\infty.
\end{align*}
Since for each $r$ there is a unique $n$ such that $r_n<r \le r_{n-1}$ while $\lim_n\frac{\log r_n}{\log r_{n-1}}=1$ so by Borel Cantelli, 
\[\liminf_{r\to0}\frac{\log\tau_r(x)}{-\log r}=\liminf_{n\to\infty}\frac{\log \tau_{2^{-n}}(x)}{n\log 2}\ge1-\eps,\]and sending $\eps$ to 0 the proposition is proved.

    For the upper bound of liminf, recall that $\tau\left({\Q_n},x\right)\ge \tau_{2^{-n}}(x)$, we can repeat the proof of \Cref{prop: upper bound for limsup rotation}, apart from that this time we choose $\{2^{-n_i}\}_i$ according to $\{q_i\}_{i\in\N}$: for each $i$, choose $n_i\in\N$ to be the smallest number such that 
    \[\|q_{i+1}\theta\|< 2^{-n_i}\le \|q_{i}\theta\|,\]
   hence as in \Cref{prop: upper bound for limsup rotation},
    \begin{equation*}
    \mu\left(\tau\left({\Q_{n_i}},x\right)>q_{i+1}+q_i\right)\le \sum_{Q\in\Q_{n_i}}\mu\left(W_Q>q_{i+1}+q_i\right)=0.
    \end{equation*}
Again by \Cref{lemma: convergent-denominator} (b), $q_{i+1}+q_i\le 2q_{i+1}\le\frac2{\|q_i\theta\|}<2^{n_i+1}$ by our choice of $n_i$, so $\lim_{i\to\infty}\frac{\log (q_i+q_{i+1})}{n_i\log2}\le 1$, therefore for $\mu$-a.e. $x$,
\[\liminf_{r\to0}\frac{\log\tau_r(x)}{-\log r}\le \liminf_{i\to\infty}\frac{\log \tau_{2^{-n_i}}(x)}{n_i\log2}\le \liminf_{i\to\infty}\frac{\log \tau\left({\Q_{n_i}},x\right)}{n_i\log2}\le 1.\qedhere\]
\end{proof}

\section{Cover time for flows}\label{section: flows}
In this section we prove results analogous to \Cref{thm: finite minkowski dimension case} regarding cover times for the same class of flows discussed in \cite[\S4]{RouTod24}. 

Let $\{f_t\}_t$ be a flow on a metric space $(\X,d_\X)$ preserving an ergodic probability measure $\nu$, \ie $\nu\left(f_t^{-1}A\right)=\nu(A)$ for every $t\ge0$ and $A$ measurable. Let $\Omega$ denote the non-wandering set and define the cover time of $x$ at scale $r$ by
\[\tau_r(x):=\inf\left\{T>0:\,\forall y\in\Omega, \,\exists t\le T:\, d(f_t(x),y)<r\right\}.\]

We will assume the existence of a Poincar\'e section $Y\subset\X$, and let $R_ Y(x)$ denote the first hitting time to $Y$, \ie $R_Y(x):=\inf\{t>0:f_t(x)\in Y\}$, with $\overline R:=\int R_Yd\nu<\infty$. Define the Poincar\'e map by $(Y,F,\mu)$ where $F=f_{R_Y}$ and $\mu$ is the induced measure on $Y$ given by $\mu=\frac1{\overline{R}}\nu|_Y$. Additionally, assume the following conditions are satisfied:
\begin{enumerate}[(H1)]
    \item \label{assumption H1}$\dim_M(\mu)$ exists and is finite for $(F,\mu)$,
    \item \label{assumption H2}$(Y,F,\mu)$ is Gibbs-Markov so \Cref{thm: finite minkowski dimension case} is applicable for $\mu$-almost every $y\in Y$.
    \item\label{assumption H3}  $\{f_t\}_t$ has bounded speed: there exists $K>0$ such that for all $t>0$, $d(f_s(x),f_{s+t}(x))<Kt$.
    \item\label{assumption H4} $\{f_t\}_t$ is topologically mixing and there exists $T_1>0$ such that
    \begin{equation}\label{eqn: union time}
        \bigcup_{0<t\le T_1}f_t(Y)=\X.
    \end{equation}
     \item\label{assumption H5} There exists 
    \[C_f:=\sup\left\{\text{diam}(f_t(I))/\text{diam}(I):I\text{ an interval contained in } Y, 0<t\le T_1\right\}\in(0,\infty)\]
\end{enumerate} 
\begin{remark}
The last condition is satisfied when \hyperref[assumption H3]{(H3)} holds and the flow is, for example, Lipschitz, \ie there exists $L>0$ such that for all $x,y\in \X$,
\[d_\X(f_t(x),f_t(y))\le L^td_\X(x,y).\]

\end{remark}

\begin{theorem}\label{thm: flow version}
Let $(f_t,\nu)$ be a probability preserving flow satisfying conditions \hyperref[assumption H1]{(H1)}-\hyperref[assumption H5]{(H5)}, then for $\nu$-almost every $x\in\Omega$,
    \begin{equation}\label{eqn: lower bound flow}\liminf_{r\to0}\frac{\log\tau_r(x)}{-\log r}\ge \underline{dim}_M(\nu)-1.\end{equation}
    Furthermore, if $\overline\dim_M(\nu)=\dim_M(\mu)+1$, 
    \begin{equation}\label{eqn: upper bound flow}\limsup_{r\to0}\frac{\log \tau_r(x)}{-\log r}\le\overline\dim_M(\mu)\hspace{2mm}\text{ $\nu$-a.e.}
    \end{equation}
\end{theorem}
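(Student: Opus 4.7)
The plan is to reduce \eqref{eqn: lower bound flow} to a single-ball hitting-time estimate in the spirit of \Cref{Prop lower bound 2}, and to reduce \eqref{eqn: upper bound flow} to the already-proven \Cref{thm: finite minkowski dimension case} applied to the Poincar\'e map $(Y,F,\mu)$.

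For \eqref{eqn: lower bound flow}, I would fix $\eps>0$ and, along the sequence $r_n=2^{-n}$, pick $z_n\in\supp(\nu)$ realising $\nu(B(z_n,r_n))=M_\nu(r_n)$. If $\tau_{r_n}^{\mathrm{flow}}(x)<T$ then $f_t(x)\in B(z_n,r_n)$ for some $t\le T$; by bounded speed (H3), the discrete sample $f_{k\delta_n}(x)$ at spacing $\delta_n=r_n/K$ lies in $B(z_n,2r_n)$ for some $k\le T/\delta_n$, and $\nu$-invariance of the flow yields
\[\nu\{x:\tau_{r_n}^{\mathrm{flow}}(x)<T\}\le\bigl(TK/r_n+1\bigr)M_\nu(2r_n).\]
Choosing $T=r_n^{-(\underline\dim_M(\nu)-1-\eps)}$ and using the definition of $\underline\dim_M(\nu)$ to absorb $M_\nu(2r_n)\le r_n^{\underline\dim_M(\nu)-\eps/2}$ for large $n$ makes the right-hand side summable; Borel--Cantelli followed by $\eps\downarrow 0$ proves \eqref{eqn: lower bound flow}.

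For \eqref{eqn: upper bound flow}, the strategy is to cover $Y$ under the Poincar\'e map at scale $r/C_f$ and then flow forward at most time $T_1$ to cover all of $\Omega$ at scale $r$. By (H2) and \Cref{thm: finite minkowski dimension case} applied to $(F,\mu)$, for $\mu$-a.e.\ $y\in Y$ and all small $r$, $N:=\tau_{r/C_f}^{\mathrm{map}}(y)\le r^{-(\overline\dim_M(\mu)+\eps)}$. Given $z\in\Omega$, (H4) writes $z=f_t(w)$ with $w\in Y$ and $t\in[0,T_1]$; some iterate $F^k y$ with $k\le N$ lies in $B(w,r/C_f)$, i.e.\ $f_{R_k}(y)\in B(w,r/C_f)$ where $R_k:=\sum_{j<k}R_Y(F^j y)$, and (H5) then gives $f_{R_k+t}(y)\in B(z,r)$. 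Hence $\tau_r^{\mathrm{flow}}(y)\le R_N+T_1$. Kac's lemma yields $R_Y\in L^1(\mu)$, and Birkhoff gives $R_N\le 2\overline R\cdot N$ for $\mu$-a.e.\ $y$ and $N$ large, so $\log\tau_r^{\mathrm{flow}}(y)/(-\log r)\le\overline\dim_M(\mu)+\eps+o(1)$. A general $x\in\Omega$ is of the form $f_s(y_0)$ with $y_0\in Y$ and $s\in[0,T_1]$ by (H4), and $\tau_r^{\mathrm{flow}}(x)\le\tau_r^{\mathrm{flow}}(y_0)+T_1$ closes the argument.

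The main obstacle lies in the geometric reduction in the upper bound, where three ingredients must combine cleanly: (H4) to realise every $z\in\Omega$ as a short forward-flow image of a section point, (H5) to convert a cover of $Y$ at scale $r/C_f$ into a cover of $\Omega$ at scale $r$, and Birkhoff applied to $R_Y$ to ensure that $N$ consecutive returns occupy flow time $\asymp\overline R\,N$ rather than being dominated by a single anomalously long excursion. A secondary but necessary check is that the $\mu$-conull subset of $Y$ on which the map cover-time bound holds lifts to a $\nu$-conull subset of $\Omega$, which follows from (H4) together with Poincar\'e recurrence.
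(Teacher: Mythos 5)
Your lower-bound argument matches the paper's in substance: you discretise time at spacing $r_n/K$ and union-bound, whereas the paper bounds the occupation time $S_{2T,2r_n}$ and applies Markov's inequality, but after using (H3) these give the same estimate up to constants. One slip: after fixing $z_n$ to realise $M_\nu(r_n)$ you write the bound with $M_\nu(2r_n)$, but the union bound produces $\nu(B(z_n,2r_n))$, which is $\geq M_\nu(2r_n)$ and has no reason to be comparable to it; this is cured by instead choosing $z_n$ to realise the minimum at scale $2r_n$ (exactly what the paper implicitly does), so it is cosmetic.

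The upper bound is where a genuine gap appears, and it is in the last step. You write a general $x\in\Omega$ as $x=f_s(y_0)$ with $y_0\in Y$, $s\in(0,T_1]$, and claim $\tau_r(x)\le\tau_r(y_0)+T_1$. This is flowing \emph{backward} from $x$ to $Y$, and the inequality fails: the forward orbit of $x$ is $\{f_u(y_0):u\geq s\}$, so if some $z\in\Omega$ is $r$-approximated by $y_0$'s orbit only at times $u<s$, the orbit of $x$ need not reach $B(z,r)$ within $\tau_r(y_0)+T_1$ --- it must wait for the next return, which is uncontrolled. The paper avoids this by flowing \emph{forward}: it sets $U_{\xi,N}=\{y\in Y:|R_n(y)-n\overline R|\leq\xi n,\ \forall n\geq N\}$, notes $\mu(U_{\xi,N})\to1$, and by $\nu$-invariance $\nu\bigl(\bigcup_{t\leq\xi N}f_{-t}U_{\xi,N}\bigr)\to1$; thus for $\nu$-a.e.\ $x$ there is $t^*\leq\xi N^*$ with $f_{t^*}(x)\in U_{\xi,N^*}\subset Y$, and then $\tau_r(x)\leq t^*+\tau_r(f_{t^*}x)$ \emph{is} valid because $f_{t^*}(x)$ is a forward iterate. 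This same construction also handles, cleanly and without appeal to Poincar\'e recurrence, the point you flagged as a secondary check --- lifting the $\mu$-conull set in $Y$ (where the Birkhoff bound on $R_N$ holds) to a $\nu$-conull set in $\Omega$. Your remaining ingredients (Lemma~\ref{lemma: induced ct > original ct} to compare $\tau_r$ with $\tau^F_{r/C_f}$, Theorem~\ref{thm: finite minkowski dimension case} applied to $(Y,F,\mu)$, and Birkhoff for $R_N\lesssim\overline R\,N$) are exactly the paper's, so once the direction of the time flow in the reduction step is reversed, the argument closes.
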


\begin{proof}[Proof of \eqref{eqn: lower bound flow}]
This proof is analogous to those of \Cref{Prop lower bound 1} and \cite[Theorem 4.1]{RouTod24}. Let 

Fix some $y\in \Omega$ and $r>0$ and consider the random variable 
\[S_{T,r}(x):=\int_0^T\I_{B(y,r)}(f_t(x))dt,\]
and observe that by bounded speed property, for all $T>r/K$,
\[\left\{x:\exists 0\le t\le T \text{ s.t. }\,d(f_t(x),y))<r\right\}\subset\left\{S_{2T,2r}(x)>r/K\right\},\]
since if $d(f_s(x),y)<r$ for some $s$, then for all $t<r/K$, $d(f_{t+s}(x),y)<2r$. Also set \[T(x,y,r):=\inf\{t\ge0:f_t(x)\in B(y,r)\},\]and similarly for all $r>0$ and all $x,z$, $\tau_{r}(x)\ge T(x,y,r)$. 

Let $\eps>0$ be arbitrary and by definition of $\underline\alpha$ for all large $n\in\N$ there exists $y_n\in\Omega$ such that $\nu(B(y_n,2^{-n}))\le 2^{-n(\underline\alpha-\eps)}$. By Markov's inequality, for some $\mathcal T_n>0$ to be decided later,
\begin{align*}
    &\nu\left(x:\tau_{2^{-n}}(x)< \mathcal T_n\right)\le \nu\left(x:T(x,y_n,2^{-n})<  \mathcal T_n\right)=\nu\left(x:\exists\,0\le t< \mathcal T_n:\,f_t(x)\in B(y_n,2^{-n})\right)\\
    &\le \nu\left(x:S_{2\mathcal T_n,2^{-n+1}}(x)>r_n/K\right)\le  K2^n\int_0^{2\mathcal T_n}\int\I_{B(y_n,2^{-n+1})}(f_t(x))d\nu(x)dt\\
    &\le K2^{n+1}\mathcal T_n\nu(B(y_n,2^{-n+1}))\le 4K\mathcal T_n2^{-(n-1)(\underline{\alpha}-\eps-1)}.
\end{align*}
Choosing $\mathcal T_n=2^{(n-1)(\underline\alpha-\eps-1)}/n^2$, the last term above is summable along $n$ hence by Borel-Cantelli, for $\nu$-almost every $x$
\[\liminf_{r\to0}\frac{\log \tau_r(x)}{-\log r}\ge\liminf_{n\to\infty}\frac{\log\mathcal T_n}{n\log2}=\underline\alpha-1-\eps,\]
and since $\eps>0$ was arbitrary one can send it to 0, and by \Cref{remark subsequence trick} the proposition is proved.\qedhere
\end{proof}
Note that the proof of lower bound is independent of the existence or mixing properties of the Poincar\'e map $(Y,F,\mu)$. For upper bound, we first prove that the cover time of the Poincar\'e $F$ in $Y$ is comparable to the cover time of the flow.
\begin{lemma}\label{lemma: induced ct > original ct}
    Define
    \[\tau_r^F(x):=\min\{n\in\N_0:\forall y\in Y,\,\exists 0\le j\le n:d(y,F^jx)<r \}.\]
    There exists $\lambda=\frac1{C_f}$ for $C_f$ defined in \hyperref[assumption H5]{(H5)} such that $\tau_{r}(x)\le T_1+\sum_{j=0}^{\tau_{\lambda r}^F(x)}R_Y(F^jx).$
\end{lemma}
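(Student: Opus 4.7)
\textbf{Proof proposal for \Cref{lemma: induced ct > original ct}.} The strategy is to reduce covering $\X$ at scale $r$ under the flow to covering the Poincar\'e section $Y$ at scale $\lambda r$ under the induced map $F$, and then to use the extra time $T_1$ afforded by \hyperref[assumption H4]{(H4)} to fill in the tube-like neighbourhood between successive returns to $Y$. Concretely, fix $r>0$, set $\lambda=1/C_f$, $N=\tau_{\lambda r}^F(x)$, and let $T_0=0$ and $T_j=\sum_{k=0}^{j-1}R_Y(F^k x)$, so that $F^j x = f_{T_j}(x)$.

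Pick any $z\in\X$ (the cover-time definition requires us to approach any point of $\Omega\subseteq\X$). By \hyperref[assumption H4]{(H4)} we can write $z=f_s(y)$ for some $y\in Y$ and $0<s\le T_1$, and by the definition of $N$ there is some $0\le j\le N$ with $d(y,F^j x)<\lambda r$. The bounded-stretching condition \hyperref[assumption H5]{(H5)}, applied to the interval in $Y$ joining $y$ and $F^j x$ together with the choice $\lambda=1/C_f$, yields
\[
d\bigl(f_s y,\,f_s(F^j x)\bigr)\;\le\; C_f\cdot \lambda r \;=\; r.
\]
Since $f_s(F^j x)=f_{T_j+s}(x)$ and $T_j+s\le T_N+T_1\le T_1+\sum_{j=0}^{N}R_Y(F^j x)$, the flow-orbit of $x$ comes within $r$ of the arbitrary point $z$ by this time, which is exactly the bound asserted by the lemma.

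The main (and essentially only) subtlety I anticipate is the passage from the interval-diameter form of \hyperref[assumption H5]{(H5)} to a pointwise stretching bound $d(f_s y,f_s y')\le C_f\,d(y,y')$: when $Y$ sits inside an interval this is automatic since the closed interval between two points has diameter equal to their distance, but for a more general $Y$ one would have to either rephrase \hyperref[assumption H5]{(H5)} for pairs of points or connect $y$ and $F^j x$ by a path in $Y$. A small bookkeeping nuisance is that the strict inequality $d(y,F^j x)<\lambda r$ only gives the non-strict $d(z,f_{T_j+s}(x))\le r$ after stretching; this can be absorbed by slightly shrinking $\lambda$ (e.g. $\lambda=(1-\delta)/C_f$) if the cover-time definition is to be read strictly, and does not affect any subsequent estimate.
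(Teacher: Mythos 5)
Your argument has the right shape and the same overall strategy as the paper's (reduce the flow cover time to the cover time of the Poincar\'e map, then use \hyperref[assumption H4]{(H4)} and \hyperref[assumption H5]{(H5)} to fill in the tube of height $T_1$), but it glosses over the one place the paper has to work, and the issue you flagged at the end is in fact the real gap, not a minor bookkeeping nuisance.

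The step $d(f_s y, f_s(F^j x))\le C_f\, d(y,F^j x)$ does not follow from \hyperref[assumption H5]{(H5)} as stated. Condition \hyperref[assumption H5]{(H5)} controls $\text{diam}(f_t(I))/\text{diam}(I)$ only for \emph{intervals $I$ contained in $Y$}. The Poincar\'e section of a Gibbs--Markov map is typically a countable union of Markov intervals (possibly with gaps), so the interval with endpoints $y$ and $F^j x$ is generally \emph{not} a subset of $Y$ when the two points lie in different branches; \hyperref[assumption H5]{(H5)} then says nothing, and the bounded-speed condition \hyperref[assumption H3]{(H3)} cannot rescue you either because it controls distances along a single orbit, not between two orbits. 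Shrinking $\lambda$ by a fixed factor, which you propose to deal with the strict/non-strict issue, does nothing for this; the obstruction is topological, not metric. The paper resolves it by not applying \hyperref[assumption H5]{(H5)} to an arbitrary pair of points at all. Instead it takes a refinement $\mathcal P(r)$ of the Markov partition of $Y$ into cylinder sets $P$ with $\text{diam}(P)\le r/C_f$ and with $f_t(P)$ connected for $0<t\le T_1$. Each such $P$ \emph{is} an interval contained in $Y$, so \hyperref[assumption H5]{(H5)} applies to $P$ itself: if $F^j x$ and the preimage $z$ of the target point $y=f_s(z)$ lie in the \emph{same} cell $P$, then $d(f_s(F^j x),y)\le\text{diam}(f_s(P))\le C_f\,\text{diam}(P)\le r$. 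The cylinder partition is exactly the ``path in $Y$'' you speculated might be needed; it is not optional in this setting. (As a side note, one should also be slightly careful that ``the $F$-orbit is $\lambda r$-dense in $Y$'' does not literally force the orbit to enter every cell of a partition into sets of diameter $\le\lambda r$; the cleanest repair is to choose the cells a constant factor smaller, which only changes $\lambda$ by a bounded amount and does not affect the asymptotics.)
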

%
\begin{proof}
    This is adapted from the proof of \cite[Lemma 6.4]{JurTod24} and \cite[Theorem 2.1]{RouTod24}. 
   $F$ is by assumption Gibbs-Markov so one can find $\P(r)$, a natural partition of $Y$ using cylinder sets with respect to $F$ such that  for each $P\in\P(r)$: (a) $ \text{diam}(P)\le r/C_f$, and (b) for all $0<t\le T_1$, $f_t(P)$ is connected. Suppose $\tau^F_{r/C_f}(x)=k$, then the orbit $\{x,F(x),\dots,F^k(x)\}$ must have visited every element of $\P$, and by \eqref{eqn: union time} for each $y\in\Omega$ there is $P\in\P(r)$ and $0<s\le T_1$ such that $y\in f_s(P)$ and hence there exists $j\le k$ such that $d\left(f_s(F^j(x)),y\right)\le C_f|P|<r.$ Then set $\lambda=1/C_f$ the lemma is proved.
\end{proof}
\begin{proof}[Proof of \eqref{eqn: upper bound flow}]   
    Now assume $\overline\dim_M(\nu)=\dim_M(\mu)+1$.
    Let $\xi>0$ be arbitrary and define the sets \[U_{\xi,N}:=\left\{x\in Y:|R_n(x)-n\overline{R}|\le \xi n,\forall n\ge N\right\},\] where $R_n(x)=\sum_{j=0}^{n-1}R_Y(F^j(x))$. By ergodicity, $\lim_N\mu(U_{\xi,N})=1$ so for $N$ large, $\nu(U_{\xi,N})>0$ hence by invariance,    
    \begin{equation}\label{eqn: entry time}\lim_{N\to\infty}\nu\left(\bigcup_{t=0}^{\xi N}f_{-t}(U_{\xi,N})\right)=1.\end{equation}

    Let $\eps>0$ be arbitrary. By \eqref{eqn: entry time} one can pick $N^*$ such that for each $\nu$ typical $x\in \X$ there is some $t^*\le \xi N^*$ such that $f_{t^*}(x)\in Y$. By \Cref{thm: finite minkowski dimension case} applied to the Poincar\'e map and \Cref{lemma: induced ct > original ct}, for all sufficiently small $r>0$ we have the following two inequalities,
    \[\frac{\log \tau_{\lambda r}^F(f_{t^*}x)}{-\log\lambda r}\le \dim_M(\mu)+\eps,\hspace{2mm}\frac{\log \left(\tau_{r}(x)-T_1\right)}{-\log r}\le \frac{\log \left((\overline R+\xi)\tau_{\lambda r}^F(f_{t^*}x)\right)}{-\log r}. \]
    Then as $\lambda, \overline R$ are constants and $\eps$ is arbitrary, for $\nu$-almost every $x$,
    \[\limsup_{r\to0}\frac{\log \tau_r(x)}{-\log r}\le \dim_M(\mu)=\overline\dim_M(\nu)-1.\qedhere\]
\end{proof}

\subsection{Example: suspension semi-flows over topological Markov shifts}
In this section, we give an example of a flow for which $\dim_M(\nu)=\dim_M(\mu)+1$ is satisfied, so \Cref{thm: flow version} is applicable.

Let $\mathcal{A}$ be a {finite} alphabet and $M$ an $\mathcal{A}\times\mathcal{A}$ matrix with $\{0,1\}$ entries, we will consider two-sided topological Markov shift systems $(\Sigma,\sigma,\phi,\mu)$, where 
\[\Sigma:=\left\{x=(\dots,x_{-1},x_0,x_1,\dots)\in\mathcal{A}^{\mathbbm Z}:\text{ for all $j$, $x_j\in\mathcal{A}$ and $M_{x_j,x_{j+1}}=1$} \right\},\]
$\sigma$ the usual left shift, $\phi$ a H\"older potential and $\mu$ is the unique Gibbs measure with respect to $\phi$. We assume that $\dim_M(\mu)\in(0,\infty)$. The natural symbolic metric on $\Sigma$ is $d(x,y)=2^{-x\land y}$, where \[x\land y=\sup\{k\ge0:x_j=y_j,\,\forall |j|<k\}.\]
An $n$-cylinder in this setting is given by
$\left[x_{-(n-1)},\dots,x_0,\dots,x_{n-1}\right]:=\left\{y\in\Sigma, \,y_j=x_j,\forall |j|<n\right\}$, and it is a well-known fact that balls in $\Sigma$ are precisely the cylinder sets. The left-shift map $\sigma$ is bi-Lipschitz with Lipschitz constant $L=2$. For more detailed description of the shift space, see \cite[\S1]{Bow75}.
 
Let $\varphi\in L^1(\mu)$ be a positive Lipschitz function, define the space 
\[Y_\varphi:=\left\{(x,s)\in \Sigma\times \R_{\ge0}:0\le s\le \varphi(x)\right\}{/\sim}\]
where $(x,\varphi(x))\sim (\sigma(x),0)$ for all $x\in I$. The suspension flow $\Psi$ over $\sigma$ is the function acts on $Y_\varphi$ by
\[\Psi_t(x,s)=(\sigma^k(x),v),\]
where $k,v\ge0$ are determined by $s+t=v+\sum_{j=0}^{k-1}\varphi(\sigma^j(x)).$ The invariant measure $\nu$ for the flow $\Psi$ on $Y_\varphi$ satisfies the following: for every $g:Y_\varphi\to\R$ continuous,
\begin{equation}\label{eqn: decomposition of flow measure}
\int gd\nu=\frac1{\int_\Sigma\varphi d\mu}\int_\Sigma\int_0^{\varphi(x)}g(x,s)dsd\mu(x).\end{equation}

The metric on $Y_\varphi$ is the Bowen-Walters distance $d_Y$ (see for example \cite{BowWal72}).
Define another metric $d_\pi$ on $Y_\varphi$: for all $(x_i,t_i)_{i=1,2}\in Y_\varphi$, 
\[d_\pi((x_1,t_1),(x_2,t_2)):=\min\left\{\begin{aligned}
    &d(x,y)+|s-t|,\\
    &d(\sigma x,y)+\varphi(x)-s+t,\\
    &d(x,\sigma y)+\varphi(y)-t+s
    \end{aligned}\right\},\]
    and the following proposition says $d_\pi$ is comparable to the Bowen-Walters distance.
\begin{proposition}\cite[Proposition 17]{BarSau00}\label{prop: alternate metric}
There exists $c=c_\pi$ such that 
    \[c^{-1}d_\pi((x_1,t_1),(x_2,t_2))\le d_Y((x_1,t_1),(x_2,t_2))\le c\, d_\pi((x_1,t_1),(x_2,t_2)).\]
\end{proposition}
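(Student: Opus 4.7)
My plan is to prove the two inequalities separately, using that both $d_Y$ and $d_\pi$ are built from the same primitive motions on $Y_\varphi$: horizontal displacement in the base $\Sigma$, vertical displacement in the flow coordinate $t$, and the identification $(x,\varphi(x))\sim(\sigma x,0)$. Since $\mathcal A$ is finite, $\Sigma$ is compact, so the positive Lipschitz function $\varphi$ satisfies $0<\varphi_-\le\varphi\le\varphi_+<\infty$. These bounds, together with $\mathrm{Lip}(\varphi)$ and the bi-Lipschitz constant of $\sigma$, will determine $c_\pi$.

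For the upper bound $d_Y\le c\,d_\pi$, I would construct, for each of the three candidates in the minimum defining $d_\pi((x_1,t_1),(x_2,t_2))$, an explicit admissible piecewise path in $Y_\varphi$ whose Bowen--Walters length is at most a constant multiple of that candidate. For the first candidate, the path $(x_1,t_1)\to(x_1,0)\to(x_2,0)\to(x_2,t_2)$ (vertical down, horizontal in the base, vertical up) has length essentially $d(x_1,x_2)+|t_1-t_2|$, up to an $O(\mathrm{Lip}(\varphi)\,d(x_1,x_2))$ correction needed to keep the horizontal segment inside a single sheet. For the second candidate, the path $(x_1,t_1)\to(x_1,\varphi(x_1))\sim(\sigma x_1,0)\to(x_2,0)\to(x_2,t_2)$ uses one identification step and has the required length; the third case is symmetric.

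For the lower bound $d_\pi\le c\,d_Y$, I would take any chain $\gamma$ realising $d_Y((x_1,t_1),(x_2,t_2))$ up to a factor close to $1$ and show its length bounds one of the three $d_\pi$ candidates from below. Decompose $\gamma$ according to its consecutive uses of the identification. Each such use is preceded by a vertical climb of length at least $\varphi_-$ (or a descent of comparable length in the opposite direction), so the total number $N$ of identifications along $\gamma$ is bounded by $\ell(\gamma)/\varphi_- + O(1)$. Cancelling any ``back-and-forth'' pairs of crossings (which only shortens the chain), the reduced chain uses at most one identification, and its horizontal and vertical contributions respectively bound the base term (e.g.\ $d(x_1,x_2)$ or $d(\sigma x_1,x_2)$) and the height term in the matching candidate of $d_\pi$, up to an $O(\mathrm{Lip}(\varphi))$ Lipschitz correction.

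\textbf{Main obstacle.} The delicate point is the lower bound: one must justify that near-optimal chains can indeed be reduced to use at most one identification, and that horizontal motion at an intermediate height $h\in(0,\varphi_+)$ is comparable (via an $O(\mathrm{Lip}(\varphi))$ vertical adjustment, using that $\varphi$ has bounded Lipschitz distortion) to horizontal motion at height $0$ measured in the symbolic metric. Once this structural simplification is rigorously carried out, the final constant $c_\pi$ depends only on $\varphi_\pm$, $\mathrm{Lip}(\varphi)$, and the bi-Lipschitz constant of $\sigma$, and both inequalities follow.
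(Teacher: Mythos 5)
The paper does not give a proof of this proposition: it is quoted verbatim from Barreira--Saussol \cite[Proposition 17]{BarSau00}, so there is no in-paper argument to compare your proposal against. I will therefore assess your outline on its own terms.

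Your upper-bound construction ($d_Y\le c\,d_\pi$) is sound: for each of the three candidates in the minimum one exhibits an admissible chain whose Bowen--Walters length is controlled by that candidate, with the corrections you flag coming from the interpolated horizontal distance $\rho_t(x,y)=(1-t)d(x,y)+t\,d(\sigma x,\sigma y)$ (handled by the bi-Lipschitz constant of $\sigma$), from $\mathrm{Lip}(\varphi)$, and from the reparametrisation $(x,t)\mapsto(x,t/\varphi(x))$ built into the Bowen--Walters definition. Since $\mathcal A$ is finite and $\varphi$ is positive Lipschitz, the bounds $0<\varphi_-\le\varphi\le\varphi_+$ are indeed available, as you note.

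The gap is in your lower bound. You correctly observe that any crossing of the identification costs at least roughly $\varphi_-$ in vertical displacement, but your claim that ``cancelling back-and-forth pairs of crossings'' reduces a near-optimal chain to one with at most one identification does not follow: a chain may cross the roof several times \emph{in the same direction}, shifting the base point by $\sigma^k$ with $k\ge2$, and no cancellation is available. What is actually needed is a dichotomy. If $d_Y((x_1,t_1),(x_2,t_2))<\varphi_-/2$ (say), then any chain of length close to $d_Y$ cannot afford a second crossing, because each additional same-direction crossing forces an extra vertical climb of order $\varphi_-$; hence near-optimal chains use at most one identification and your comparison with the three $d_\pi$ candidates goes through. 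If instead $d_Y\ge\varphi_-/2$, then both $d_Y$ and $d_\pi$ are bounded above (since $\Sigma$ is compact with $\mathrm{diam}(\Sigma)\le1$ and $\varphi\le\varphi_+$) and bounded below by $\varphi_-/2$ and some fixed positive constant respectively, so the inequality holds trivially with a constant depending only on $\varphi_\pm$. Your appeal to an unquantified ``$O(1)$'' in the crossing count hides exactly this scale separation; once you make the dichotomy explicit, the argument closes and the constant $c_\pi$ depends, as you predicted, only on $\varphi_\pm$, $\mathrm{Lip}(\varphi)$, and the bi-Lipschitz constant of $\sigma$.
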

Then the Minkowski dimension of the flow-invariant measure $\nu$ is given by 
\begin{proposition}$\dim_M(\nu)=\dim_M(\mu)+1$.
\end{proposition}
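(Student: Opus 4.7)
The strategy is to use \Cref{prop: alternate metric} to replace the Bowen--Walters distance $d_Y$ by the bi-Lipschitz equivalent metric $d_\pi$, which has an explicit piecewise formula. Since Minkowski dimension is invariant under bi-Lipschitz changes of metric, computing $\dim_M(\nu)$ with respect to $d_\pi$ suffices.

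The heart of the argument is a two-sided estimate
\[
\nu\bigl(B_{d_\pi}((x,s), r)\bigr) \asymp \frac{r}{\int\varphi\,d\mu}\, \mu\bigl(B_\Sigma(x, r)\bigr),
\]
holding uniformly for $(x,s) \in \supp(\nu)$ and all sufficiently small $r$. For $(x,s)$ in the interior of the suspension (\ie $r \le s \le \varphi(x) - r$), the second and third terms defining $d_\pi$ each exceed $r$, so the ball collapses to $\{(y,t) : d(x,y) + |s-t| < r\}$. Using the disintegration \eqref{eqn: decomposition of flow measure} and the Lipschitz property of $\varphi$ (so that $\varphi(y) \ge \varphi(x) - O(r)$ on $B_\Sigma(x, r)$), the inner integral over $t$ contributes a factor of order $r$, and one concludes with the stated comparability after noting that $\mu(B_\Sigma(x, r/2)) \asymp \mu(B_\Sigma(x, r))$ by the Gibbs property. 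For $(x,s)$ near the identification boundary $\{s=0\} \cup \{s=\varphi(x)\}$, the ball splits into at most three product-type pieces corresponding to the three terms in $d_\pi$; each contributes a quantity of the form $r\mu(B_\Sigma(x',r))$ for $x' \in \{x, \sigma x, \sigma^{-1}x\}$, and since balls in $\Sigma$ are symmetric cylinder sets and $\mu$ is Gibbs with H\"older potential $\phi$, a standard bounded-distortion argument gives $\mu(B_\Sigma(\sigma^{\pm 1}x, r)) \asymp \mu(B_\Sigma(x, r))$ with a constant depending only on $\phi$ and $P$.

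Taking the infimum over $(x,s) \in \supp(\nu)$ then yields $M_\nu(r) \asymp r \cdot M_\mu(r) / \int\varphi\,d\mu$, so the $O(1)$ constants wash out in the logarithmic limit and
\[
\dim_M(\nu) = \lim_{r \to 0} \frac{\log M_\nu(r)}{\log r} = 1 + \lim_{r \to 0} \frac{\log M_\mu(r)}{\log r} = 1 + \dim_M(\mu),
\]
with both upper and lower Minkowski dimensions existing and agreeing. The main technical obstacle will be the boundary analysis: making the decomposition into three pieces rigorous while controlling overlaps, and verifying the Gibbs comparability uniformly as $r$ varies through the dyadic scale. These are tractable because the alphabet is finite, $\varphi$ is bounded away from $0$ and $\infty$, and Gibbs measures enjoy uniform bounded distortion under the shift.
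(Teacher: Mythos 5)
Your proof is correct and follows essentially the same route as the paper: pass to the Barreira--Saussol metric $d_\pi$ via Proposition 5.3, sandwich the $d_\pi$-ball between product-type sets, and read off the extra factor of $r$ from the integral over the flow direction in the disintegration \eqref{eqn: decomposition of flow measure}. The only substantive difference is in how the shifted centres $\sigma^{\pm 1}x$ appearing in the boundary pieces are handled: you invoke a Gibbs/bounded-distortion argument to get $\mu(B_\Sigma(\sigma^{\pm 1}x,r))\asymp\mu(B_\Sigma(x,r))$, while the paper instead uses $\sigma$-invariance of $\mu$ together with the bi-Lipschitz bound $B(x,r/2)\subset\sigma^{-1}B(\sigma x,r)\subset B(x,2r)$ to convert $\mu(B(\sigma^{\pm 1}x,r))$ into $\mu(B(x,2r))$, which is then absorbed in the $\log r$ limit. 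Both arguments are sound; the paper's invariance trick has the advantage of not needing the Gibbs property for that particular step, while your version makes the uniform constant explicit. Your extra remark on the interior case (where the second and third branches of $d_\pi$ are inactive) is a legitimate refinement, though the paper's uniform three-set cover $B_1\cup B_2\cup B_3$ renders the case split unnecessary.
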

\begin{proof}
The proof is based on the proof of \cite[Theorem 4.3]{RouTod24} for correlation dimensions.

By \Cref{prop: alternate metric} for all $r>0$, \[\left(B(x,r/2c)\times(s-r/2c,s+r/2c)\right)\cap Y\subset B_Y((x,s),r)\] where $B_Y$ denotes the ball with respect to the metric $d_Y$, then for all $(x,s)\in Y_\varphi$, put $\overline\varphi=\int_\Sigma\varphi d\mu$, then
\[\nu(B_Y((x,s),r))\ge \nu(B(x,r/2c)\times\left(s-\frac{r}{2c},s+\frac{r}{2c}\right),\]
\[\frac{\log\nu(B_Y((x,s),r))}{\log r}\le \frac{\log\left(\frac{r}{c\overline\varphi}\mu\left(B(x,\frac r{2c})\right)\right)}{\log r}.\]
Hence $\overline{\dim}_M(\nu)=\limsup_{r\to0}\frac{\log \min_{(x,s)\in supp(\nu)}\nu(B_Y((x,s),r)}{\log r}\le \dim_M(\mu)+1$.

For lower bound, define
\[B_1:=B(x,c r)\times(s-c r,s+c r),\hspace{2mm}B_2:=B(\sigma x,c r)\times[0,cr),\]
\[B_3:=\left\{(y,t):y\in B(\sigma^{-1}x,2cr),\text{ and }\varphi(y)-cr\le t\le \varphi(y)\right\}.\]
Then as in the proof of \cite[Theorem 4.3]{RouTod24}, $B_Y((x,s),r)\subset\left(B_1\cup B_2\cup B_3\right)\cap Y_\varphi$.


For all $r>0$ and $(x,s)\in Y_\varphi$ by \eqref{eqn: decomposition of flow measure}, and as $\mu$ is $\sigma,\sigma^{-1}$ invariant,
\begin{align*}
&\nu(B_1\cap Y_\varphi)={2cr}\mu(B(x,cr))/{\overline\varphi},\hspace{2mm} \nu(B_2,Y_\varphi)\le cr\mu(B(x,cr))/\overline\varphi\\
&\nu(B_3\cap Y_\varphi)\le cr\mu(\sigma^{-1}B(x,2cr))/\overline\varphi=cr\mu(B(x,2cr))/\overline\varphi. 
\end{align*}
Therefore 
\[\nu(B_Y((x,s),r)\le\frac1{{\overline\varphi}}\left(3r\mu(B(x,cr))+cr\mu(B(x,2cr))\right),\]
which is enough to conclude that $\underline{dim}_M(\nu)\ge\dim_M(\mu)+1$.
\end{proof}

\section*{Acknowledgement}
I acknowledge the grant from \emph{Chinese Scholarship Council}. I am also thankful for various comments and help from my supervisor M. Todd, as well as other comments on \Cref{section: flows} from J. Rousseau.

\end{document}